\numberwithin{equation}{section}
\renewcommand{\leq}{\leqslant}
\newcommand{\R}{{\bf R}}
\newcommand{\N}{{\bf N}}
\newcommand{\winf}[1]{W^{1,\infty}(#1)}
\newcommand{\X}{X}
\newcommand{\rr}{{\mathbf{R}}}
\newcommand{\Om}{\Omega}
\newcommand{\mA}{\mathcal{A}}
\newcommand{\scu}{\to }
\newcommand{\lp}[1]{L^{p}(#1)}
\renewcommand{\so}[1]{W^{1,p}(#1)}
\newcommand{\soz}[1]{W_0^{1,p}(#1)}
\newcommand{\sol}[1]{W^{1,p}_{loc}(#1)}
\newcommand{\anso}[1]{W^{1,p}_X(#1)}
\newcommand{\ansol}[1]{W^{1,p}_{X,loc}(#1)}
\newcommand{\Cm}{\C_{P}}
\newcommand{\lul}{L^1_{loc}(\Omega)}
\newcommand{\C}{\mathcal{C}}
\newcommand{\average}{{\mathchoice {\kern1ex\vcenter{\hrule height.4pt
				width 6pt
				depth0pt} \kern-9.7pt} {\kern1ex\vcenter{\hrule height.4pt width 4.3pt
				depth0pt}
			\kern-7pt} {} {} }}
\newfont{\bbten}{bbold12}
\definecolor{champagne}{rgb}{0.97, 0.91, 0.81}
\definecolor{asparagus}{rgb}{0.53, 0.66, 0.42}
\DeclareMathOperator{\divv}{div}
\DeclareMathOperator{\Lip}{Lip}
\DeclareMathOperator{\im}{Im}
\newtheorem{theorem}{Theorem}[section]
\newtheorem{proposition}[theorem]{Proposition}
\newtheorem{definition}[theorem]{Definition}
\newtheorem{corollary}[theorem]{Corollary}
\theoremstyle{definition}
\newtheorem{example}[theorem]{Example}
\newtheorem{remark}[theorem]{Remark}
\title[A survey on anisotropic integral representation results]{A survey on anisotropic integral representation results}
\author[S.\ Verzellesi]{Simone Verzellesi}
\address[S.\ Verzellesi]{ Dipartimento di Matematica ``Tullio Levi-Civita'', Universit\`a di Padova,\newline \indent via Trieste 63 35121, Padova - Italy}
\email{simone.verzellesi@unipd.it}
\subjclass{49J45, 49Q20, 53C17}
\keywords{Integral representation; Local functionals; Anisotropic functionals; Vector fields}
\thanks{ The author is member of the Istituto Nazionale di Alta Matematica (INdAM), Gruppo Nazionale per l'Analisi Matematica, la Probabilità e le loro Applicazioni (GNAMPA). The author is supported by MIUR-PRIN 2022 Project \emph{Regularity problems in sub-Riemannian structures}  (Grant Code: 2022F4F2LH), and by INdAM-GNAMPA 2025 Project \emph{Structure of sub-Riemannian hypersurfaces in Heisenberg groups}, (Grant Code: CUP\_ES324001950001).
}
\begin{document}
\begin{abstract}
In this note we review some recent results concerning integral representation properties of local functionals driven by Lipschitz continuous anisotropies.
\end{abstract}
\maketitle


\section{Introduction}
A \emph{local functional} is a functional $F$ of two variables:
 a function $u$ belonging to a chosen functional space, a set $A$ belonging to a chosen family of sets.
The adjective \emph{local} refers to the fact that $F(u,A)$ depends only on the values that $u$ takes on $A$. An important class of local functionals is that of \emph{integral functionals}, e.g. functionals of the form
\begin{equation*}
    (u,A)\mapsto\int_A f(x,u,Du)\,dx,
\end{equation*}
where, say, $A$ is open and $u\in W^{1,p}(A)$. Integral functionals constitute the cornerstone of the modern calculus of variations, and describing their importance clearly goes beyond our scope. We refer to \cite{MR0990890,FonsLeonLp,doi:10.1142/5002} for complete introductions to this topic. In the study of optimization problems for local functionals, it is now an established practice to rely on the tool of \emph{$\Gamma$-convergence}, as introduced by  E. De Giorgi and T. Franzoni \cite{MR0375037,MR0448194}.
For an exhaustive introduction to this topic, we refer to the monographs \cite{ MR1968440, MR1684713,MR1201152}. 
Since the late 1970s, G. Buttazzo and G. Dal Maso have investigated $\Gamma$-convergence in the framework of Lebesgue spaces, Sobolev spaces and $BV$ spaces \cite{MR0583636, MR0794824, MR0567216}. A key tool in the study of $\Gamma$-convergence properties in these frameworks consists in the so-called \emph{integral representation}. By integral representation we mean finding conditions under which a local functional $F(u,A)$ can be expressed e.g. as 
\[
F(u,A)=\int_{A}f(x,u,Du)\, dx
\]
for a suitable \emph{Lagrangian} $f$. In the Euclidean setting this problem is very well understood, and we refer the interested reader to \cite{MR1271411,MR1020296,MR0583636,MR0839727,MR0794824} for a complete overview of the subject. 
In this note, we review some recent results concerning local functionals driven by Lipschitz continuous anisotropies, tailored to the study of anisotropic integral functionals of the form
\begin{equation}\label{introlimitfun}
    F(u,A)=\int_A f(x,u,Xu)\,dx,\qquad A\subseteq\Om,\,u\in W^{1,p}_{X}(\Om),
\end{equation}
where $W^{1,p}_{X}(\Om)$ is an appropriate anisotropic Sobolev space. An \emph{anisotropy} is a family $X=(X_1,\ldots,X_m)$ of Lipschitz continuous vector fields, which induces the anisotropic gradient 
\begin{equation*}
    Xu=\left(X_1u,\ldots,X_mu\right)=X_1u X_1+\cdots+X_muX_m.
\end{equation*}
The term \emph{anisotropy} is motivated by the fact that $X$ may vary point by point.
The investigation of anisotropic variational functionals as in \eqref{introlimitfun}, as well as their related functional frameworks, is originally motivated by L. H\"ormander's seminal work on \emph{hypoelliptic operators} \cite{Hoormander1967147}. 
The latter constitutes a milestone in the study of differential operators with underlying \emph{sub-Riemannian} type structures. 
Important evidences are the works of G.B. Folland \cite{Folland1975161} and of L. Rothschild and E.M. Stein \cite{MR0436223} in the context of \emph{stratified} and \emph{nilpotent Lie groups}. We refer to \cite{MR2363343} for further accounts on analysis on Lie groups, and to \cite{MR3971262,MR1421823,Enrico2025book} for thorough introductions to sub-Riemannian geometry.
As a prototypical example, consider the smooth anisotropy $X=(X_1,X_2)$ on $\R^3$, where 
\begin{equation}\label{anisotropiheisenberg}
    X_1=\frac{\partial}{\partial x_1}+x_2\frac{\partial}{\partial x_3}\qquad\text{and}\qquad X_2=\frac{\partial}{\partial x_2}-x_1\frac{\partial}{\partial x_3}.
\end{equation}
The latter generate, via Lie-brackets, the Lie algebra of the so-called \emph{first Heisenberg group}, and induce on it a \emph{sub-Riemannian structure}. 
The relevant hypoelliptic operator associated with $X$ is the so-called \emph{sub-Laplacian}
\begin{equation}\label{laplaciansub}
    \left(X_1\right)^2u+\left(X_2\right)^2 u,
\end{equation}
and its associated \emph{Dirichlet energy} reads as
\begin{equation} \label{dirienesub}
    \int_\Omega \left(|X_1u|^2+|X_2 u|^2\right)\,{\rm d}x.
\end{equation}
As \eqref{laplaciansub} is not elliptic, \eqref{dirienesub} is not coercive. 
Moreover, the Euclidean Sobolev spaces $W^{1,2}(\Om)$ are not the correct finiteness domains for functionals as in \eqref{laplaciansub}. 
These issues are clear obstructions to the use of classical techniques to study minimization properties of \eqref{dirienesub}, preventing for instance the $L^2$-lower semicontinuity of its Euclidean extension to $L^2(\Om),$ namely
\begin{align*}
    \displaystyle{\begin{cases}
    \int_\Omega \left(|X_1u|^2+|X_2 u|^2\right)\,{\rm d}x&\text{ if }u\in W^{1,2}(\Om),\\
    \infty&\text{ if }u\in L^2(\Omega)\setminus W^{1,2}(\Om).
\end{cases}}
\end{align*}

To overcome these obstacles, building on the foundational work of Folland and Stein \cite{MR0657581}, the correct functional framework has been developed by B. Franchi, R. Serapioni and F. Serra Cassano \cite{MR1437714,MR1448000} and by N. Garofalo and D.M. Nhieu \cite{MR1404326,GN}. 
Precisely, when $1\leq p<\infty$, the study of anisotropic functionals in the greater generality of \eqref{introlimitfun} can be carried out via the introduction of suitable anisotropic Sobolev and $BV$ spaces, say $W^{1,p}_X(\Om)$ and $BV_X(\Om)$, emerging as the natural domains of functionals as in \eqref{introlimitfun}. 
When instead $p=\infty$, the reader is referred to \cite{Capogna2024,PVW,MR2247884,MR2546006} for some anisotropic $L^\infty$-variational problems.\\

The study of anisotropic integral representation results started in \cite{MR4108409,MR4504133}, where the authors investigated integral representation and $\Gamma$-convergence properties of functionals $F(u,A)$ modeled by suitable anisotropies $X=(X_1,\ldots,X_m)$, assuming that
\begin{equation}\label{revisiogiulic}\tag{LIC}
    \text{$X_1(x),\ldots,X_m(x)$ are linearly independent for a.e. $x\in\Om$}
\end{equation}
and
\begin{equation}\label{revisiogiutraslinv}\tag{TI}
    \text{$F(u+c,A)=F(u,A)$ for any function $u$, any set $A$ and any constant $c$.}
\end{equation}
According to \cite{MR4108409}, we  call a functional \emph{translation-invariant} whether it satisfies \eqref{revisiogiutraslinv}, while we say that $X$ satisfies the \emph{linear independence condition} if it satisfies \eqref{revisiogiulic}. Local functionals satisfying \eqref{revisiogiutraslinv} model integral functionals of the form
\begin{equation}\label{traslinvreprintro}
F(u,A)=\int_{A}f(x,Du)\, dx,
\end{equation}
i.e. whose Lagrangian is independent of $u$. Although not fully general, the study of functionals satisfying \eqref{revisiogiutraslinv} is nevertheless highly relevant in the literature. Therefore, we will limit ourselves mainly to this class.
On the other hand, although \eqref{revisiogiulic} already embraces many relevant families of  vector fields studied in literature (cf. \cite{MR4504133} for some instances), we will show how to avoid it. 
More precisely, in \cite{MR4108409} the authors found conditions under which $F$ can be represented as 
\begin{equation}
\label{F_p}F(u,A)=\int_{A}f(x,Xu)\, dx \qquad\text{for any $A\subseteq\Om$ open, for any $u\in \ansol{A}\cap L^p(\Om)$,}
\end{equation}
and for a suitable Lagrangian $f:\Omega\times \mathbf{R}^m\to [0,\infty)$.
Moreover, they applied this characterization to prove a $\Gamma$-compactness theorem for integral functionals of the form \eqref{F_p}, when $ 1< p < \infty$. 
We refer to \cite{MR4054935} for similar results under stronger conditions on the family $\X$.  These results were later extended in \cite{MR4609808,MR4566142} beyond \eqref{revisiogiutraslinv}, to model integral functionals of the form
\[
F(u,A)=\int_{A}f(x,u,Xu)\, dx \qquad\text{for any $A\subseteq\Om$ open, for any $u\in \ansol{A}\cap L^p(\Om)$,}
\]
 and generalizing their Euclidean counterparts \cite{MR0583636,MR0794824}.
Although the above-mentioned contributions relies on the linear independence condition \eqref{revisiogiulic}, the latter has been definitely removed in \cite{withoutlic}, allowing for anisotropic integral representation results in the greatest generality.\\

In this paper we review the approach to the above integral representations, focusing on the case in which \eqref{revisiogiutraslinv} holds. First, we recall the strategy developed by Buttazzo and Dal Maso to prove the Euclidean result. Their approach can be summarized into the following three steps:
\begin{enumerate}
    \item [1.]Prove \eqref{traslinvreprintro} when $u$ is a piecewise affine function. This step require the fact that $F$ is a measure in its second variable.
    \item[2.] Recall that piecewise affine functions are dense in Sobolev spaces.
    \item [3.]Exploit the above steps, together with lower semicontinuity properties of $F$, to extend \eqref{traslinvreprintro} to all Sobolev functions.   
\end{enumerate}
This scheme, which applies \eqref{revisiogiutraslinv}, relies heavily on the second step. However, as noticed in \cite{MR4108409}, it is not generally the case that anisotropic piecewise affine functions approximate anisotropic Sobolev functions. To this aim, the authors of \cite{MR4108409} developed the following alternative strategy.
\begin{enumerate}
	\item[1.] Apply the Euclidean integral representation result to the functional, obtaining an integral representation with respect to a Euclidean Lagrangian $f_e$ of the form
	\begin{equation*}
		F(u,A)=\int_Af_e(x,u,Du)\,dx\qquad\text{for any $A\subseteq\Om$ open, for any $u\in \sol{A}\cap L^p(\Om)$.}
	\end{equation*}
	\item[2.]Find conditions on $f_e$ guaranteeing \color{black} the existence of a Lagrangian $f$ such that
	\begin{equation}\label{primo}
		\int_Af_e(x,u,Du)\,dx=\int_Af(x,u,Xu)\,dx\qquad\text{for any $A\subseteq\Om$ open, for any $u\in C^\infty(A)$.}
	\end{equation}
\item[3.]Combine the previous steps with density properties of smooth functions.
\end{enumerate}
While the third step relies on well-established anisotropic Meyers-Serrin-type approximation results \cite{MR1437714,MR1404326}, the second one constitutes the key of this approach. Indeed, its achievement led the authors of \cite{MR4108409} to assume \eqref{revisiogiulic}, and the author of \cite{withoutlic} to remove this assumption. We point out that, although \eqref{revisiogiulic} is in the end not necessary to establish \eqref{primo}, it nevertheless plays a role in certain secondary issues, including, for example, the uniqueness of representation.\\

Our exposition is organized as follows. In \Cref{Section2} we recall the background about the anisotropic functional framework (\Cref{section21}), introducing the relevant Sobolev spaces (\Cref{secsobspac}) and discussing density properties of piecewise affine and smooth functions (\Cref{secapproxproperties}). In \Cref{seclocfun}, we introduce the basic terminology of local functionals. In \Cref{mainsection} we prove the integral representation results in the \eqref{revisiogiutraslinv} setting. Precisely, first we recall the Euclidean approach (\Cref{seceuclappr}), then we thoroughly explain how to establish \eqref{primo} (\Cref{pseudosection}), and finally we complete the proof of the anisotropic integral representation result (\Cref{secanisores}). In \Cref{seclicnolic} we explain when and how \eqref{revisiogiulic} is actually relevant. Finally, in \Cref{secbeyondti} we guide the reader to existing literature, including cases beyond \eqref{revisiogiutraslinv} and applications to $\Gamma$-convergence.

\section{Anisotropic functional frameworks}\label{Section2}
\subsection*{Main notations}
If no ambiguity arises, we let $\infty=\infty $ and $\N=~\N~\setminus~\{0\}$. If $1\leq p\leq\infty$, we denote by $p'$ the H\"older-conjugate exponent of $p$. 
We fix $m,n\in\N$ such that $0<m\leq n$. 
For $\alpha,\beta\in\N$, we denote by $M(\alpha,\beta)$ the set of matrices with $\alpha$ rows and $\beta$ columns.
We denote by $\langle\cdot,\cdot\rangle$ the Euclidean scalar product, and by $|\cdot|$ its associated norm.
If $L:\R^\alpha\to \R^\beta$ is a linear map, we denote by $\ker(L)\subseteq\R^\alpha$ and $\im(L)\subseteq\R^\beta$ its kernel and its range, respectively.
Fixed an open and bounded set $\Om\subseteq\R^n$, we denote by $\mA$ the class of all the open subsets of $\Om$, and by $\mathcal B$ the class of all the Borel subsets of $\Om$. We let $\mA_0$ be the subfamily of $\mA$ of all the open subsets $A$ of $\Om$ such that $A\Subset\Om$ and by $\mathcal{B}_0$ the subfamily of $\mathcal{B}$ of all the Borel subsets $B$ of $\Om$ such that $B\Subset \Om$.
Finally, we denote by $Du$ the (distributional) Euclidean gradient of $u$. If no ambiguity arises, gradients may be either columns or rows. 
\subsection{Anisotropies} \label{section21}


In the following, $\Om$ is an open and bounded subset of $\R^n$. Given a family $\X=(X_1,\ldots,X_m)$ of Lipschitz continuous vector fields on $\Omega$, such that
\[
X_j=\sum_{i=1}^nc_{j,i}\frac{\partial}{\partial x_i},\quad\text{with }c_{j,i}\in\Lip(\Om)
\]
for any $j=1,\ldots,m$ and any $i=1,\ldots,n$, we denote by $\C(x)\coloneqq[c_{j,i}(x)]_{\substack{{i=1,\dots,n}\\{j=1,\dots,m}}}$ the $m\times n$ \emph{coefficient matrix} associated with $X$. We may refer to $X$ as \emph{anisotropy} or \emph{$X$-gradient}. This notation is motivated by the following definition.
\begin{definition}\label{xgraddef}
Let $u$ and $V$ be a $1$-dimensional and an $m$-dimensional distribution.
\begin{enumerate}
    \item The $X$-gradient of $u$ is the $m$-dimensional distribution defined by 
    \begin{equation*}
        Xu(\varphi)= -u\left(\divv(\C^T\varphi)\right)\qquad\text{for any $\varphi\in \mathbf{C}^\infty _c(\Om;\R^m)$.}
    \end{equation*}
    \item The $X$-divergence of $V$ is the $1$-dimensional distribution defined by 
    \begin{equation*}
        \divv_X(V)(\varphi)= -V(\C D\varphi)\qquad\text{for any $\varphi\in \mathbf{C}^\infty _c(\Om)$.}
    \end{equation*}
\end{enumerate}
\end{definition}
Below are some examples of relevant anisotropies.
\begin{example}[The Euclidean space]
    When $m=n$ and $X_j=\frac{\partial}{\partial x_j}$, we are in the Euclidean isotropic setting, and we feel there is no need to add more.
\end{example}
\begin{example}[Riemannian structures]
    When $m=n$ and $X$ is a global frame of smooth vector fields, $\Om$ can be endowed with a Riemannian metric for which $X$ is an orthonormal frame. Then $X$, $\divv_X$ and $d$ are the Riemannian gradient, divergence and distance respectively.
\end{example}
\begin{example}[Carnot groups]\label{excarnotgroups}
    Let $n=3$, $m=2$, $X_1=\frac{\partial}{\partial x_1}+x_2\frac{\partial}{\partial x_3}$ and $X_2=\frac{\partial}{\partial x_2}-x_1\frac{\partial}{\partial x_3}$. In this case, $X$ is a basis of left-invariant vector fields of the \emph{first Heisenberg group}, a particular Lie group which constitute the first non-trivial instance in the wider class of \emph{Carnot groups}.
\end{example}
\begin{example}[Bracket-generating anisotropies]
  In \Cref{excarnotgroups}, $[X_1,X_2]=-2\frac{\partial}{\partial x_3}$, whence the Lie algebra generated by $X$ has full rank at every point. Vector fields satisfying this property are called \emph{bracket-generating}, and are the building blocks of \emph{sub-Riemannian geometry}.  Not every bracket-generating anisotropy gives rise to a Carnot group. The simplest instance is the \emph{Grushin plane}, i.e. $n=m=2$, $X_1=\frac{\partial}{\partial x_1}$ and $X_2=x_1\frac{\partial}{\partial x_2}$. Indeed, $[X_1,X_2]=\frac{\partial}{\partial x_2}$.
\end{example}
\begin{example}[Carnot-Carathéodory spaces]
    An important tool associated with anisotropies is the so-called \emph{Carnot-Carath\'eodory distance} induced by $X$ on $\Om$ (cf. e.g. \cite{MR1421823}). Precisely, let $x,y\in\Om$, and let $\gamma:[0,1]\to\Om$ be any absolutely continuous curve joining $x$ and $y$. $\gamma$ is \emph{horizontal if}
\begin{equation}\label{horicurvebah}
    \dot\gamma(t)=\sum_{j=1}^ma_j(t)X_j(\gamma(t))\qquad\text{for a.e. $t\in[0,1]$.}
\end{equation}
Then the Carnot-Carathéodory distance is defined by
\begin{equation*}
    d(x,y)=\inf\left\{\int_0^1|(a_1,\ldots,a_m)|\,dt\,:\,\gamma:[0,1]\to\Om,\,\gamma(0)=x,\gamma(1)=y,\,\text{\eqref{horicurvebah} holds}\right\}.
\end{equation*}
Depending on the anisotropy, $d$ may not be always finite. If it is the case, it is a length distance, and $(\Om,d)$ is called \emph{Carnot-Carathéodory space}  \cite{MR0793239}. By \emph{Chow-Rashevskii connectivity theorem} (cf. \cite{MR1880} and \cite[Chapter 3]{Enrico2025book}), bracket-generating anisotropies give rise to finite Carnot-Carathéodory distances. Nevertheless,  this condition is not necessary. As an instance, consider the the family $X=(X_1,X_2)$ of vector fields defined on $\Om=(-1,1)^2\subseteq\rr^2$ by
    \begin{equation*}
        X_1(x)=\frac{\partial}{\partial x_1}\qquad\text{and}\qquad X_2(x)=
\begin{cases}
0&\text{ if }x_1\in(-1,0)\\
x_1\frac{\partial}{\partial x_2}&\text{ if }x_1\in[0,1)
\end{cases}
\,. \qquad\text{  for any $x=(x_1,x_2)\in\Om$.}
    \end{equation*}
 $\Om$ is easily a Carnot-Carathéodory space, while $X$ is not bracket-generating. 
\end{example}
\subsection{Sobolev spaces} \label{secsobspac}
Owing to \Cref{xgraddef}, we introduce the main functional framework.
The anisotropic Sobolev space $ \anso{\Om}$ is defined by
\begin{equation*}
      \anso{\Om}=\{u\in L^p(\Om)\,:\,Xu\in L^p(\Om;\R^m)\}.\\
\end{equation*}
The space $W^{1,p}_{X,loc}(\Om)$ is defined as usual.
It is well-known (cf. \cite{MR0657581}) that $\anso{\Om}$, endowed with the norm
\[
\Vert u\Vert_{\anso{\Om}}\coloneqq\Vert u\Vert_{L^p(\Om)}+\Vert Xu\Vert_{L^p(\Om;\R^m)},
\]
is a Banach space for any $1\leq p<\infty $, reflexive when $1<p<\infty $. 
The Lipschitz continuity assumption on the family $\X$ ensures that 
$W^{1,p}(\Om)$ embeds continuously into $W^{1,p}_X(\Om)$ (cf. \cite{MR4609808,MR4108409}), where $W^{1,p}(\Om)$ denotes the Euclidean Sobolev space.
Precisely, for any $u\in W^{1,p}(\Omega)$, the $X$-gradient admits the Euclidean representation
\begin{equation}\label{euclrapprgrad}
    Xu(x)=\C(x)Du(x)\qquad\text{for a.e. $x\in\Om$.}
\end{equation}
\subsection{Approximation properties}\label{secapproxproperties}
For what concerns density of smooth functions, the classical result of Meyers and Serrin 
is still valid in the anisotropic setting and it was proved, independently, in \cite{MR1437714} and \cite{GN}.
We state the result in what follows, and refer the interested reader to \cite{MR4504133} for further discussions on this topic.
\begin{theorem}\label{MeySer}
For any $1\leq p <\infty$, it holds that $\anso{\Om}=\overline{\mathbf{C}^\infty(\Om)\cap W^{1,p}_X(\Om)}^{\|\cdot\|_{W^{1,p}_X(\Om)}}.$
\end{theorem}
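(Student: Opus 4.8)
The plan is to transplant the classical Meyers--Serrin argument, whose only mechanism is mollification, into the anisotropic setting. The single new ingredient required is a \emph{Friedrichs-type commutator lemma}: although the fields $X_j$ have variable coefficients, the Lipschitz regularity of those coefficients ensures that convolution with a standard mollifier commutes with the $X$-gradient up to an error that vanishes locally in $L^p$. Granting this, the Euclidean proof carries over essentially verbatim, with the Euclidean gradient $D$ replaced by $X$; the reverse inclusion $\overline{C^\infty(\Om)\cap\anso{\Om}}\subseteq\anso{\Om}$ is automatic because $\anso{\Om}$ is a Banach space.

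\emph{Step 1: the commutator lemma.} I would first show that, for $u\in\ansol{\Om}$ and $\{\rho_\varepsilon\}_{\varepsilon>0}$ a family of standard mollifiers, $X(u*\rho_\varepsilon)\to Xu$ in $\lpl{\Om;\R^m}$ as $\varepsilon\to0$. Fix $j\in\{1,\dots,m\}$ and put $R_\varepsilon u\coloneqq X_j(u*\rho_\varepsilon)-(X_ju)*\rho_\varepsilon$, which is meaningful since $u*\rho_\varepsilon\in C^\infty$ and $X_ju\in\lpl{\Om}$. Expanding $(X_ju)*\rho_\varepsilon(x)$ via \Cref{xgraddef} — reading it as $X_ju$ tested against the test function $y\mapsto\rho_\varepsilon(x-y)$ and integrating by parts — one finds that $R_\varepsilon$ is a \emph{linear integral operator in $u$ alone}, whose kernel is built from the increments $c_{j,i}(x)-c_{j,i}(y)$, from the bounded functions $\partial_i c_{j,i}$, and from $\nabla\rho_\varepsilon$. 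Lipschitz continuity enters precisely here: on the set $\{|x-y|\leq\varepsilon\}$ one has $|c_{j,i}(x)-c_{j,i}(y)|\leq\Lip(c_{j,i})\,\varepsilon$, which exactly compensates the $\sim\varepsilon^{-n-1}$ size of $\nabla\rho_\varepsilon$ (which has $L^1$-norm $\sim\varepsilon^{-1}$); together with the boundedness of the $\partial_i c_{j,i}$, this yields a bound $\|R_\varepsilon u\|_{\lp{K}}\leq C(K,K')\|u\|_{\lp{K'}}$ uniform in $\varepsilon$, for every $K\Subset K'\Subset\Om$. When $u\in C^\infty$, both $X_j(u*\rho_\varepsilon)$ and $(X_ju)*\rho_\varepsilon$ converge to $X_ju$ uniformly on compact subsets of $\Om$, so $R_\varepsilon u\to0$ uniformly on compacta; by density of $C^\infty$ in $\lpl{\Om}$ together with the uniform bound, $R_\varepsilon u\to0$ in $\lpl{\Om}$ for every $u\in\lpl{\Om}$. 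Since $(X_ju)*\rho_\varepsilon\to X_ju$ in $\lpl{\Om}$ by the classical properties of mollification, we conclude $X_j(u*\rho_\varepsilon)=(X_ju)*\rho_\varepsilon+R_\varepsilon u\to X_ju$.

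\emph{Step 2: localization.} Fix $u\in\anso{\Om}$ and $\delta>0$. Choose an exhaustion $\Om_1\Subset\Om_2\Subset\cdots$ with $\bigcup_k\Om_k=\Om$, put $\Om_0=\Om_{-1}=\emptyset$, and take a smooth partition of unity $\{\zeta_k\}_{k\geq1}$ subordinate to the locally finite open cover $\{\Om_{k+1}\setminus\overline{\Om_{k-1}}\}_{k\geq1}$. The Leibniz rule $X(\zeta_ku)=\zeta_k\,Xu+u\,X\zeta_k$ — valid for $\zeta_k\in C^\infty_c$ and $u\in\ansol{\Om}$ because $X\zeta_k$ is bounded with compact support — shows that each $\zeta_ku\in\anso{\Om}$ has support in $\Om_{k+1}\setminus\overline{\Om_{k-1}}$. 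For each $k$, select $\varepsilon_k>0$ small enough that $\supp\big((\zeta_ku)*\rho_{\varepsilon_k}\big)\subseteq\Om_{k+2}\setminus\overline{\Om_{k-2}}$ and, combining $\|(\zeta_ku)*\rho_{\varepsilon_k}-\zeta_ku\|_{\lp{\Om}}\to0$ with Step 1, that
\[
\big\|(\zeta_ku)*\rho_{\varepsilon_k}-\zeta_ku\big\|_{\anso{\Om}}<\delta\,2^{-k}.
\]
Set $v\coloneqq\sum_k(\zeta_ku)*\rho_{\varepsilon_k}$. The sum is locally finite, so $v\in C^\infty(\Om)$; and, since $\sum_k\zeta_ku=u$, on every $\Om_j$ one has $\|v-u\|_{\anso{\Om_j}}\leq\sum_k\big\|(\zeta_ku)*\rho_{\varepsilon_k}-\zeta_ku\big\|_{\anso{\Om}}<\delta$. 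Letting $j\to\infty$ yields $v\in\anso{\Om}$ and $\|v-u\|_{\anso{\Om}}\leq\delta$, which proves the density.

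\emph{Main obstacle.} The whole argument rests on Step 1, and within it on the uniform-in-$\varepsilon$ bound for $R_\varepsilon$: this is exactly where the standing hypothesis that the coefficients of $X$ are Lipschitz is used essentially (for merely continuous coefficients the naive estimate diverges, and one would need finer, DiPerna--Lions type commutator arguments). Once the commutator lemma is available, Step 2 is the textbook Meyers--Serrin construction and is entirely routine.
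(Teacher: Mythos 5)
Your proposal is correct; note that the paper itself gives no proof of \Cref{MeySer}, stating it as a known result from \cite{MR1437714} and \cite{GN}. The argument you outline — the Friedrichs-type commutator lemma exploiting the Lipschitz continuity of the coefficients, followed by the classical Meyers--Serrin partition-of-unity patching — is precisely the strategy of those cited proofs, so your route coincides with the standard one the paper relies on.
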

Under additional assumptions on $X$, Poincar\'e inequalities and Rellich-Kondrachov-type theorems may be deduced.
We refer the reader to \cite[Proposition 2.16]{MR4504133} and \cite[Theorem 3.4]{MR1448000}, respectively. Instead, when we come to density of affine functions, the Euclidean and the anisotropic framework differ. In the Euclidean setting, it is well-known that piecewise affine functions approximate Sobolev functions. We recall that  $u:\Om\to\rr$ is \emph{piecewise affine} if  $u\in C^0(\overline{\Om})$ and if there exists a finite family $\{\Om_1,\ldots,\Om_s\}\subseteq\mA$ and a negligible set $N$ such that 
	\begin{equation*}
		\Om=\bigcup_{i=1}^s\Om_i\cup N\qquad\text{and}\qquad u|_{\Om_i}\text{ is affine for any }i=1,\ldots,s.
	\end{equation*}
The following density result holds.
\begin{proposition}\label{3011ektem2}
	Let $1\leq p<\infty$. Let $u\in\so{\Om}$ and let $A'\Subset\Om$. Then there exists a sequence $(u_h)_h$ of piecewise affine functions converging to $u$ strongly in $\so{A'}$.
\end{proposition}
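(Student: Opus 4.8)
The plan is to prove this classical density result by a combination of mollification and a triangulation argument, which is the standard route in the Euclidean theory of Sobolev spaces. First I would fix $u \in W^{1,p}(\Omega)$ and $A' \Subset \Omega$, and choose an intermediate open set $A''$ with $A' \Subset A'' \Subset \Omega$. By mollifying $u$ against a standard mollifier, I obtain a sequence $(v_k)_k \subseteq C^\infty(\overline{A''})$ with $v_k \to u$ strongly in $W^{1,p}(A'')$; this is routine and uses only $A'' \Subset \Omega$. Thus it suffices to approximate, for each fixed $k$, the smooth function $v_k$ on $A'$ in the $W^{1,p}$-norm by piecewise affine functions, since then a diagonal argument produces the desired sequence $(u_h)_h$.

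The heart of the matter is therefore the following local statement: a function $v \in C^1(\overline{A'})$ (or even $C^1$ on a neighbourhood of $\overline{A'}$) can be approximated in $W^{1,p}(A')$ by piecewise affine functions. Here I would cover $\overline{A'}$ by a simplicial complex (a triangulation) of mesh size $\delta$, contained in $A''$, and let $u_\delta$ be the continuous function that agrees with $v$ at the vertices of the triangulation and is affine on each simplex — the Lagrange $P^1$ interpolant. On each simplex $T$ of diameter $\lesssim \delta$, the standard interpolation estimate gives $\|v - u_\delta\|_{L^\infty(T)} \lesssim \delta \, \|Dv\|_{L^\infty(T)}$ and $\|D(v - u_\delta)\|_{L^\infty(T)} \lesssim \delta \, \|D^2 v\|_{L^\infty(T)}$, so that summing over the finitely many simplices and using $v \in C^1$ (with the second-order bound replaced by a uniform-continuity-of-$Dv$ argument, or by first smoothing so that $v \in C^2$) yields $\|v - u_\delta\|_{W^{1,p}(A')} \to 0$ as $\delta \to 0$. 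One minor technical point: the domain $A'$ need not be polyhedral, so the triangulation will only approximately fill $A'$; I would handle this by triangulating a slightly larger polyhedral set $P$ with $A' \subseteq P \Subset A''$ and restricting, or by noting that the mismatch region has small measure and contributes negligibly. Finally, $u_\delta$ is globally continuous on $\overline{\Omega}$ after extending it (constantly, or by any fixed piecewise affine extension) outside $P$, so it genuinely belongs to the class of piecewise affine functions in the sense defined above.

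The main obstacle — though in this Euclidean setting it is entirely classical — is the interpolation estimate together with the need to control second derivatives: the naive $P^1$-interpolation bound for the gradient involves $D^2 v$, which a merely $C^1$ function does not control. This is why the two-step structure (first mollify to get smooth, even $C^\infty$, functions, then interpolate) is essential: on the smooth function $v_k$ the second derivatives are bounded on $\overline{A''}$, so the interpolation error in $W^{1,\infty}(A')$, hence in $W^{1,p}(A')$, tends to zero with the mesh. Once both reductions are in place, the result follows by extracting a diagonal sequence from the double family $(u_{k,\delta})$, choosing $\delta = \delta(k) \to 0$ fast enough that $\|u_{k,\delta(k)} - v_k\|_{W^{1,p}(A')} \to 0$, and combining with $v_k \to u$. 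I would also remark that, in contrast, no such statement survives when $D$ is replaced by the anisotropic gradient $X$, which is precisely the motivation, recalled in the introduction, for the alternative strategy via smooth functions rather than piecewise affine ones.
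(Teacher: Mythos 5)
Your argument is correct in substance, but it follows a genuinely different route from the paper. The paper does not prove this proposition directly: it deduces it as a corollary of \Cref{3011ektem1}, quoted from Ekeland--T\'emam, namely the density of piecewise affine functions vanishing on the boundary in $W^{1,p}_0(\tilde\Om)$ for a Lipschitz domain $\tilde\Om$; one then localizes by multiplying $u$ with a cut-off equal to $1$ near $\overline{A'}$ and supported in a suitable Lipschitz (say polyhedral) set $\tilde\Om$ with $A'\Subset\tilde\Om\Subset\Om$, applies the cited result to $\varphi u\in W^{1,p}_0(\tilde\Om)$, and extends the approximants by zero. Your proof instead is self-contained: interior mollification to reduce to a smooth function, then $P^1$ Lagrange interpolation on a fine triangulation of a polyhedral neighbourhood of $\overline{A'}$, with the standard finite-element error estimates, and a diagonal extraction. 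What your approach buys is independence from the cited result and an explicit, quantitative construction; what it costs is the technical bookkeeping you partly gloss over: the gradient interpolation estimate requires shape-regular simplices (it degenerates on thin ones), and, more importantly, the approximant must be a piecewise affine function in the sense of the paper, i.e.\ continuous on $\overline\Om$ with finitely many affine pieces on all of $\Om$. Your parenthetical ``extend constantly'' would in general destroy continuity across $\partial P$; the correct fix, which you also mention, is a genuine piecewise affine extension, e.g.\ extend the triangulation of $P$ to a triangulation of a large cube containing $\overline\Om$ compatibly with the faces of $\partial P$, assign arbitrary values at the new vertices, and interpolate. With that adjustment (and the shape-regularity remark) your argument is complete, and only the values on $A'$ enter the convergence estimate, as you note.
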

\Cref{3011ektem2} is a corollary of the following result (cf. \cite[Proposition 2.8]{MR1727362}).
\begin{proposition}\label{3011ektem1}
	Let $1\leq p<\infty$. Let $\tilde{\Om}$ be an open and bounded subset of $ \bf R^n $ with Lipschitz boundary. Let $u\in\soz{\tilde{\Om}}$. Then there exists a sequence $(u_h)_h$ of piecewise affine functions on $\tilde{\Om}$ such that, for any $h\in\mathbf{N}$, $u_h|_{\partial\tilde{\Om}}=0$ and converging to $u$ strongly in $\so{\tilde{\Om}}$.
\end{proposition}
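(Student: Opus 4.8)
The plan is to reduce the statement to the case of a smooth, compactly supported function and then to interpolate such a function on a simplicial mesh. First I would invoke the very definition of $\soz{\tilde{\Om}}$: there exists a sequence $(\var_k)_k\subseteq\mathbf{C}_c^\infty(\tilde{\Om})$ with $\var_k\to u$ strongly in $\so{\tilde{\Om}}$. If for each such $\var_k$ I can produce piecewise affine functions which vanish on $\partial\tilde{\Om}$ and converge to $\var_k$ in $\so{\tilde{\Om}}$ as a mesh parameter tends to $0$, then a standard diagonal selection combined with the triangle inequality yields the sequence claimed in the statement: choosing the mesh parameter $h_k$ so that the corresponding approximant $w_{k}$ satisfies $\Vert w_k-\var_k\Vert_{\so{\tilde{\Om}}}<1/k$, one gets $\Vert w_k-u\Vert_{\so{\tilde{\Om}}}\le 1/k+\Vert\var_k-u\Vert_{\so{\tilde{\Om}}}\to0$. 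Thus the entire content is concentrated on approximating a single $\var\in\mathbf{C}_c^\infty(\tilde{\Om})$.

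To treat such a $\var$, I would exploit that $K\coloneqq\supp\var\Subset\tilde{\Om}$. Fix a polyhedral open set $P$ (for instance a finite union of dyadic cubes) with $K\Subset P\Subset\tilde{\Om}$, and let $\{\mathcal T_h\}_{h>0}$ be a shape-regular family of simplicial triangulations of $P$ with mesh size bounded by $h$. Let $I_h\var$ be the continuous nodal piecewise linear interpolant of $\var$ on $\mathcal T_h$, extended by $0$ on $\tilde{\Om}\setminus P$. Since $\var\equiv0$ in a neighbourhood of $\partial P$, for $h$ small enough $I_h\var$ vanishes near $\partial P$, so its zero-extension is continuous on $\overline{\tilde{\Om}}$, is affine on each (open) simplex of $\mathcal T_h$ and on the remaining open piece $\tilde{\Om}\setminus\overline P$, and satisfies $I_h\var|_{\partial\tilde{\Om}}=0$. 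As the skeleton of $\mathcal T_h$ and $\partial P$ are negligible, $I_h\var$ is piecewise affine on $\tilde{\Om}$ in the sense recalled before the statement.

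It then remains to prove $I_h\var\to\var$ in $\so{\tilde{\Om}}$ as $h\to0$. As both functions vanish outside $P$, I only need the interpolation error on $P$, which is the classical finite element estimate: on each simplex $T\in\mathcal T_h$, affine-scaling (Bramble--Hilbert) arguments give $\Vert\var-I_h\var\Vert_{\so{T}}\le C\,h\,\Vert D^2\var\Vert_{L^\infty(T)}\,|T|^{1/p}$, with $C$ depending only on $p$ and on the shape-regularity constant of the family $\{\mathcal T_h\}$. Raising to the $p$-th power and summing over the finitely many simplices (recall $p<\infty$) yields $\Vert\var-I_h\var\Vert_{\so{P}}\le C\,h\,\Vert D^2\var\Vert_{L^\infty(P)}\,|P|^{1/p}$, which tends to $0$ since $\var\in\mathbf{C}^2$ with bounded second derivatives. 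This closes the single-function step, and the diagonal argument above completes the proof.

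The main obstacle I anticipate is the simultaneous control of the homogeneous boundary condition and of the finiteness of the affine pieces: a Lipschitz domain $\tilde{\Om}$ is not polyhedral, so a naive global triangulation cannot exactly fill $\tilde{\Om}$ up to $\partial\tilde{\Om}$. This is precisely why I first pass to compactly supported smooth functions and triangulate only the intermediate polyhedral set $P$, extending by zero; the compact support of $\var$ is exactly what makes the zero-extension continuous and forces the vanishing boundary values for free. The only remaining delicate point is the uniformity of the interpolation constant $C$, which is guaranteed by imposing shape-regularity (uniform nondegeneracy) on the family of triangulations.
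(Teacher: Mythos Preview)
Your argument is correct. The paper does not actually prove this proposition: it merely cites \cite[Proposition 2.8]{MR1727362} (Ekeland--T\'emam) and uses it as a black box to derive \Cref{3011ektem2}. Your proof---reduction to $\var\in\mathbf{C}^\infty_c(\tilde\Om)$ by the definition of $\soz{\tilde\Om}$, nodal piecewise-linear interpolation on a shape-regular simplicial mesh of an intermediate polyhedral set $P$ with $\supp\var\Subset P\Subset\tilde\Om$, zero-extension, and a Bramble--Hilbert estimate followed by a diagonal selection---is exactly the standard finite-element route and matches the argument in the cited reference. The only point worth making explicit is that your interpolant is piecewise affine \emph{in the paper's sense} (continuous on $\overline{\tilde\Om}$, affine on finitely many open pieces covering $\tilde\Om$ up to a null set), which you do address: the open simplices together with $\tilde\Om\setminus\overline P$ give the finite open family, and the skeleton together with $\partial P$ is Lebesgue-null.
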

Recall that $u$ is affine if and only if $Du$ is constant. If we call a smooth function $u$ \emph{X-affine} as soon as $Xu$ is constant, it is natural to wonder whether the above properties hold for anisotropic Sobolev functions.As shown in \cite{MR4108409}, the answer is negative. The notion of piecewise $X$-affine function is given \emph{verbatim} as above. Let $\Om$ be a bounded open subset of $\rr^3$. Let $X_1,X_2$ be as in \Cref{excarnotgroups}. Let $u(x_1,x_2,x_3)=x_3$. Clearly $u\in C^\infty(\overline\Om)$, and so in particular $u\in W^{1,p}_X(\Om)$ for any $p\geq 1$. An easy computation reveals that a function $v$ is $X$-affine if and only if
    \begin{equation*}
        v(x_1,x_2,x_3)=ax_1+bx_2+c
    \end{equation*}
    for some $a,b,c\in\rr$. Therefore, since $X$-affine functions do not depend on $x_3$, $u$ cannot be pointwise approximated by piecewise $X$-affine functions.
\section{Local functionals and Carathéodory functions}\label{seclocfun}
\subsection{Local functionals}
   
In this section we collect some definitions about increasing set functions and local functionals (cf. \cite{MR1201152}).
The latter, heuristically, behave like variational functionals in the first entry and like measures in the second one. 
\begin{definition}
	We say that $\alpha:\mA\scu [0,\infty]$ is
	\begin{itemize}
	  \item [1.]  \emph{increasing} if 	$\alpha(A)\leq\alpha(B)\text{ for any $A,B\in\mA$ such that $A\subseteq B$;}$
        \item [2.]  \emph{inner regular} if it is increasing and $\alpha(A)=\sup\{\alpha(A')\,:\,A'\Subset A\}\text{ for any $A\in\mA$}$;
        \item[3.]  \emph{subadditive} if it is increasing and
      $            \alpha(A)\leq\alpha(B)+\alpha(C)\text{ for any $A,B,C\in\mA$ with $A\subseteq B\cup C$;}$
      \item[4.]\emph{superadditive} if it is increasing and $		\alpha(C)\geq\alpha(A)+\alpha(B)$ for any $A,B,C\in\mA$ with $A\cap B=\emptyset$ and $A\cup B\subseteq C$;
	\item[5.] a \emph{measure} if it is increasing and it is the restriction to $\mA$ of a Borel measure. 
	\end{itemize}
	\end{definition}
    The next result (cf. \cite[Theorem 14.23]{MR1201152}) gives conditions for a set function to be a measure.
\begin{theorem}\label{2511alfaealfastar}
	Let $\alpha:\mA\scu [0,\infty]$ be an increasing function such that $\alpha(\emptyset)=0$. Define the function $\alpha^*:\mathcal{B}\scu[0,\infty]$ by
	\begin{equation*}
		\alpha^*(B):=\inf\{\alpha(A)\,:\,A\in\mA,\,B\subseteq A\}.
	\end{equation*}
	Then the following conditions are equivalent:
	\begin{itemize}
		\item[1.] $\alpha$ is a measure;
		\item[2.] $\alpha$ is subadditive, superadditive and inner regular;
		\item[3.] $\alpha^*$ is a Borel measure coinciding with $\alpha$ on $\mA$.
	\end{itemize}
\end{theorem}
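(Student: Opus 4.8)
The plan is to establish the cycle of implications $(1)\Rightarrow(2)\Rightarrow(3)\Rightarrow(1)$, the step $(2)\Rightarrow(3)$ --- which is essentially the classical measure criterion of De Giorgi and Letta --- being the only substantial one.

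For $(1)\Rightarrow(2)$ one simply unwinds the definitions: if $\alpha$ is the restriction to $\mA$ of a Borel measure $\mu$, then finite subadditivity and superadditivity on disjoint sets follow at once from monotonicity and additivity of $\mu$, while inner regularity follows by observing that, for $A\in\mA$, the open sets $A_k:=\{x\in A:\operatorname{dist}(x,\R^n\setminus A)>1/k\}$ satisfy $A_k\Subset A$ and $A_k\uparrow A$, so that $\alpha(A_k)=\mu(A_k)\uparrow\mu(A)=\alpha(A)$ by continuity from below. The implication $(3)\Rightarrow(1)$ is immediate: statement $(3)$ exhibits $\alpha$ as the restriction to $\mA$ of the Borel measure $\alpha^*$, which is precisely what it means for $\alpha$ to be a measure.

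The bulk of the work lies in $(2)\Rightarrow(3)$. That $\alpha^*=\alpha$ on $\mA$ is clear, since $\alpha^*(A)\leq\alpha(A)$ by testing with $A$ itself and $\alpha^*(A)\geq\alpha(A)$ by monotonicity of $\alpha$. To prove that $\alpha^*$ is a Borel measure I would run a Carathéodory-type construction. Extend the defining formula of $\alpha^*$ to arbitrary subsets of $\Om$ by setting $\mu^*(E):=\inf\{\alpha(A):A\in\mA,\ E\subseteq A\}$. Then $\mu^*$ is an outer measure: monotonicity and $\mu^*(\emptyset)=0$ are obvious, and countable subadditivity reduces, via the usual choice of open covers $A_k\supseteq E_k$ with $\alpha(A_k)\leq\mu^*(E_k)+\varepsilon2^{-k}$, to the countable subadditivity of $\alpha$ on open sets; this last property follows from finite subadditivity together with inner regularity, because any $A'\Subset\bigcup_kA_k$ has compact closure, hence is covered by finitely many of the $A_k$, and one concludes by iterating finite subadditivity and taking the supremum over such $A'$.

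The decisive point is then that every $A\in\mA$ is $\mu^*$-measurable in the Carathéodory sense, i.e. $\mu^*(E)\geq\mu^*(E\cap A)+\mu^*(E\setminus A)$ for every $E\subseteq\Om$. Given $\varepsilon>0$, choose $U\in\mA$ with $E\subseteq U$ and $\alpha(U)\leq\mu^*(E)+\varepsilon$, and set $V:=U\cap A\in\mA$. For each $V'\Subset V$, the sets $V'$ and $W:=U\setminus\overline{V'}$ are disjoint, open, contained in $U$, and --- since $\overline{V'}\subseteq V\subseteq A$ --- satisfy $E\cap A\subseteq V$ and $E\setminus A\subseteq W$; superadditivity on disjoint open sets yields $\alpha(V')+\alpha(W)\leq\alpha(U)$, whence $\mu^*(E\cap A)+\mu^*(E\setminus A)\leq\alpha(V)+\alpha(W)\leq\alpha(V)+\alpha(U)-\alpha(V')$. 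Taking the supremum over $V'\Subset V$ and using inner regularity of $\alpha$ at $V$, the right-hand side tends to $\alpha(U)\leq\mu^*(E)+\varepsilon$ when $\alpha(V)<\infty$, the case $\alpha(V)=\infty$ being trivial since then $\mu^*(E)\geq\alpha(U)-\varepsilon=\infty$. Letting $\varepsilon\to0$ proves the measurability of $A$. By Carathéodory's theorem $\mu^*$ restricts to a complete measure on its $\sigma$-algebra of measurable sets, which contains $\mA$ and hence the Borel $\sigma$-algebra $\mathcal{B}$; therefore $\alpha^*=\mu^*|_{\mathcal{B}}$ is a Borel measure, and combined with $\alpha^*=\alpha$ on $\mA$ this is exactly $(3)$. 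I expect the genuinely delicate step to be this last measurability argument: the \emph{collar} trick of replacing the non-open set $U\setminus A$ by the open set $U\setminus\overline{V'}$ and absorbing the resulting loss through inner regularity is the crux, while everything else is routine bookkeeping.
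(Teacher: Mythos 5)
Your proof is correct. Note, however, that the paper itself gives no proof of this statement: it is the De Giorgi--Letta measure criterion, quoted with a citation to Dal Maso's monograph (Theorem 14.23 there), so there is no internal argument to compare yours against. What you supply is essentially the standard self-contained proof of that criterion: $(1)\Rightarrow(2)$ and $(3)\Rightarrow(1)$ are routine, and for $(2)\Rightarrow(3)$ you run a Carathéodory construction, first obtaining countable subadditivity of $\alpha$ on open sets from finite subadditivity, inner regularity and compactness of $\overline{A'}$ for $A'\Subset\bigcup_k A_k$, and then proving Carathéodory measurability of each $A\in\mA$ via the collar sets $V'\Subset V=U\cap A$ and $W=U\setminus\overline{V'}$, superadditivity applied in $U$, and inner regularity to absorb the loss $\alpha(V)-\alpha(V')$. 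The only delicate points are the ones you already flag: subtracting $\alpha(V')$ and cancelling $\alpha(V)$ require finiteness, which is available once you reduce to $\mu^*(E)<\infty$ (then $\alpha(U)<\infty$, hence $\alpha(V),\alpha(V')<\infty$ by monotonicity), and the infinite cases are disposed of correctly. For completeness one would add the one-line remark that the Borel $\sigma$-algebra of $\Om$ is generated by $\mA$, so the Carathéodory $\sigma$-algebra contains $\mathcal B$ and $\alpha^*=\mu^*|_{\mathcal B}$ is indeed a Borel measure coinciding with $\alpha$ on $\mA$ -- but that is exactly the bookkeeping you describe.
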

The next result is an immediate corollary of \Cref{2511alfaealfastar}.
\begin{corollary}\label{2511alfaisradon}
	Let $\alpha:\mA\scu [0,\infty ]$ be a measure. Let $\alpha^*$ be as in \Cref{2511alfaealfastar}. The following conditions are equivalent:
	\begin{itemize}
		\item[1.] For any $A'\in\mA_0$, $\alpha(A')<\infty $;
		\item[2.] $\alpha$ is a Radon measure.  
	\end{itemize}
\end{corollary}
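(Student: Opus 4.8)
\textbf{Proof proposal for \Cref{2511alfaisradon}.} The plan is to derive both implications from \Cref{2511alfaealfastar}. The non-trivial direction is $(1)\Rightarrow(2)$: assuming $\alpha(A')<\infty$ for every $A'\in\mA_0$, I want to show $\alpha$ is a Radon measure. Since $\alpha$ is already a measure, \Cref{2511alfaealfastar} gives that $\alpha^*$ is a Borel measure on $\Om$ with $\alpha^*|_{\mA}=\alpha$; so the whole task reduces to checking that $\alpha^*$ is \emph{Radon}, i.e.\ finite on compact subsets of $\Om$ (local finiteness), together with the inner and outer regularity built into $\alpha^*$ by construction.

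First I would observe that outer regularity on Borel sets is immediate from the definition $\alpha^*(B)=\inf\{\alpha(A):A\in\mA,\,B\subseteq A\}$, and inner regularity on open sets is precisely the inner regularity of $\alpha$ furnished by \Cref{2511alfaealfastar}(2); standard measure theory then upgrades this to inner regularity of $\alpha^*$ on all Borel sets of finite measure once local finiteness is known. So the crux is local finiteness. Let $K\Subset\Om$ be compact. Because $\Om$ is open, one can choose an open set $A'$ with $K\subseteq A'\Subset\Om$, e.g.\ $A'=\{x\in\Om: d(x,K)<\tfrac12 d(K,\partial\Om)\}$, which lies in $\mA_0$. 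Then by definition of $\alpha^*$ and hypothesis (1),
\[
\alpha^*(K)\leq\alpha(A')<\infty.
\]
Hence $\alpha^*$ is finite on every compact subset of $\Om$, so $\alpha^*$ is a Radon measure, and since $\alpha=\alpha^*|_{\mA}$, the set function $\alpha$ is a Radon measure (a measure whose Borel extension is Radon). This proves $(1)\Rightarrow(2)$.

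For the converse $(2)\Rightarrow(1)$: if $\alpha$ is a Radon measure then by definition its Borel extension $\alpha^*$ is finite on compact sets; given $A'\in\mA_0$ we have $A'\Subset\Om$, so $\overline{A'}$ is a compact subset of $\Om$ and $\alpha(A')=\alpha^*(A')\leq\alpha^*(\overline{A'})<\infty$. This gives (1). I do not expect any serious obstacle here: the only point requiring a little care is the passage from inner regularity on open sets to inner regularity of the Borel extension on finite-measure Borel sets, but this is a classical fact valid for any Borel measure that is outer regular and locally finite, and in any case it is already packaged inside \Cref{2511alfaealfastar}; the argument is essentially the sandwiching $K\subseteq A'\Subset\Om$ together with monotonicity of $\alpha^*$.
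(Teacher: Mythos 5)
Your argument is correct and is exactly the intended one: the paper offers no written proof, calling the corollary an immediate consequence of \Cref{2511alfaealfastar}, and your sandwiching $K\subseteq A'\Subset\Om$ for local finiteness (and $\overline{A'}$ compact for the converse) is precisely that immediate argument. No gaps; the extra worry about inner regularity is unnecessary since Radon here just means the Borel extension is finite on compact subsets of $\Om$.
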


The following definition adapts some standard notation to the anisotropic setting.
\begin{definition}
    If $F:\lp{\Om}\times\mA\scu[0,\infty]$, we say that $F$ is:
	\begin{itemize}
	\item [1.] a \emph{measure} if $F(u,\cdot)$ is a measure for any $u\in\lp{\Om}$;
	\item [2.] \emph{local} if, for any $A\in\mA$ and $u,v\in\lp{\Om}$, then
	$$u|_{A}=v|_{A}\implies F(u,A)=F(v,A);$$
\item [3.] \emph{convex} if $F(\cdot,A)$ restricted to $\ansol{A}\cap L^p(\Om)$ is convex for any $A\in\mA$;
	\item [4.] \emph{$L^p$-lower semicontinuous} (respectively \emph{$W_X^{1,p}$-lower semicontinuous}) if $F(\cdot,A)$ is  $L^p$-lower semicontinuous (respectively $W_X^{1,p}$-lower semicontinuous) for any $A\in\mA$;
	\item [5.]  \emph{weakly-$^\star$ sequentially lower semicontinuous} if $F(\cdot,A)$ restricted to ${\winf{\Om}}$ is weakly-$^\star$ sequentially lower-semicontinuous for any $A\in\mA$.
	\end{itemize}
\end{definition}
We point out that many of the above notions are not relevant in the \eqref{revisiogiutraslinv} setting.
\subsection{Carathéodory functions}
In order to ensure that an integral functional of the form 
\begin{equation*}\label{ifpercaratheodorydef}
    F(u,A)=\int_Af(x,u,Xu)\,dx
\end{equation*}
is well-defined, we impose some conditions on the Lagrangian $f$ in such a way that the function
\begin{equation*}
    x\mapsto f(x,u(x),Xu(x))
\end{equation*}
is integrable on $A$ for any $u\in W^{1,1}_{X,loc}(A)\cap L^p(\Om)$. To this aim, it is customary to work in the class of \emph{Carathéodory functions} (cf. \cite{MR0990890}).
\begin{definition}[Carathéodory functions]
     Let $f:\Om\times \rr\times\rr^m\to  [0,\infty]$ be a function. We say that $f$ is a \emph{Carathéodory function} if:
 \begin{itemize}
 	\item [(i)] $f(\cdot,u,\eta)$ is measurable for any $u\in\rr$ and any $\eta\in\rr^m$;
	\item [(ii)] $f(x,\cdot,\cdot)$ is continuous for a.e. $x\in\Om$.
 \end{itemize}
 \end{definition}
 Carathéodory functions constitute the right class of Lagrangians, as the next proposition shows (cf. e.g. \cite{MR1727362})
 \begin{proposition}
      Let $f:\Om\times \rr\times\rr^m\to  [0,\infty]$ be a Carathéodory function. Then 
      \begin{equation}
    x\mapsto f(x,u(x),Xu(x))
\end{equation}
is measurable for any $A\in\mA$ and any $u\in W^{1,1}_{X,loc}(A)\cap L^p(\Om)$. In particular, being $f$ non-negative, it is integrable on $A$.
 \end{proposition}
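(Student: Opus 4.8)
The plan is to reduce the claim to the Euclidean theory of Carath\'eodory functions via the pointwise representation \eqref{euclrapprgrad}. Recall that if $f:\Om\times\rr\times\rr^m\to[0,\infty]$ is Carath\'eodory, then the composition $(x,u,\eta)\mapsto f(x,u,\C(x)\eta)$ is Carath\'eodory as a function on $\Om\times\rr\times\rr^n$, since $x\mapsto\C(x)$ is continuous (indeed Lipschitz) and composition of a continuous map with a Carath\'eodory function in the $(u,\eta)$-variables preserves the Carath\'eodory property, while measurability in $x$ is preserved because $\C(\cdot)$ is measurable. Thus the problem becomes: given $u\in W^{1,1}_{X,\loc}(A)\cap L^p(\Om)$, show that $x\mapsto f(x,u(x),Xu(x))$ is measurable.

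First I would treat the case $u\in W^{1,1}_{\loc}(A)\cap L^p(\Om)$, i.e. when $u$ is Euclidean Sobolev. By \eqref{euclrapprgrad} we have $Xu(x)=\C(x)Du(x)$ a.e., so $f(x,u(x),Xu(x))=\tilde f(x,u(x),Du(x))$ where $\tilde f(x,v,\eta)\coloneqq f(x,v,\C(x)\eta)$ is Carath\'eodory on $\Om\times\rr\times\rr^n$ by the previous paragraph. Then the classical Euclidean result (cf. \cite{MR1727362,MR0990890}) that $x\mapsto\tilde f(x,u(x),Du(x))$ is measurable whenever $u\in W^{1,1}_{\loc}(A)$ applies directly: one approximates $(u,Du)$ a.e. by simple functions, uses continuity of $\tilde f$ in the last two variables and measurability in the first, and passes to the limit. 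This settles measurability, and non-negativity of $f$ then gives integrability of the (possibly $+\infty$-valued) function on $A$.

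To handle a general $u\in W^{1,1}_{X,\loc}(A)\cap L^p(\Om)$, which need \emph{not} lie in $W^{1,1}_{\loc}(A)$, I would invoke the anisotropic Meyers--Serrin theorem (\Cref{MeySer}): fix an exhaustion $A_k\Subset A_{k+1}\Subset A$ with $\bigcup_k A_k=A$, and on each $A_k$ choose smooth functions $u_j\in\mathbf{C}^\infty(A_k)$ with $u_j\to u$ in $W^{1,1}_X(A_k)$; passing to a subsequence we may assume $u_j\to u$ and $Xu_j\to Xu$ pointwise a.e.\ on $A_k$. Each $x\mapsto f(x,u_j(x),Xu_j(x))$ is measurable by the smooth case just treated (a smooth function is in $W^{1,1}_{\loc}$), and since $f(x,\cdot,\cdot)$ is continuous for a.e.\ $x$, the pointwise a.e.\ limit $x\mapsto f(x,u(x),Xu(x))$ is measurable on $A_k$. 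Letting $k\to\infty$ gives measurability on all of $A$, and again non-negativity yields integrability.

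The main obstacle is precisely the gap flagged in \Cref{secapproxproperties}: anisotropic Sobolev functions are in general not Euclidean Sobolev, so one cannot directly quote the Euclidean measurability statement for arbitrary $u\in W^{1,1}_{X,\loc}$. The device that circumvents it is the combination of (i) the pointwise identity \eqref{euclrapprgrad} reducing the \emph{smooth} case to the Euclidean one, and (ii) the anisotropic Meyers--Serrin density theorem together with the continuity of $f$ in its non-$x$ variables to pass from smooth functions to general $W^{1,1}_{X,\loc}$ functions. A minor technical point to be careful with is that $f$ may take the value $+\infty$: one should work with the extended real line throughout, noting that pointwise a.e.\ limits and measurability are unaffected, and that $[0,\infty]$-valued measurable functions are automatically integrable in the extended sense once non-negative.
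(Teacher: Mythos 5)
Your proof is correct, but it is considerably more roundabout than what the statement requires; the paper itself gives no argument and simply records this as the standard composition property of Carath\'eodory functions (citing the classical literature). The standard argument is: since $u\in W^{1,1}_{X,loc}(A)\cap L^p(\Om)$, both $u$ and $Xu$ are locally integrable, hence measurable, maps on $A$, and for a Carath\'eodory $f$ the composition $x\mapsto f(x,v(x),V(x))$ is measurable whenever $v$ and $V$ are \emph{merely measurable} --- one approximates $(v,V)$ a.e.\ by simple functions, uses measurability of $f(\cdot,s,\eta)$ for fixed $(s,\eta)$ and continuity of $f(x,\cdot,\cdot)$ to pass to the limit. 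This is exactly the argument you invoke in your Euclidean step, but notice that it nowhere uses the Sobolev structure of $u$: it needs only measurability of the pair $(u,Xu)$. Consequently the ``main obstacle'' you flag --- that $W^{1,1}_{X,loc}(A)$ is not contained in $W^{1,1}_{loc}(A)$ --- is not an obstacle for this measurability statement, and the detour through \eqref{euclrapprgrad}, the auxiliary Carath\'eodory function $\tilde f(x,v,\eta)=f(x,v,\C(x)\cdot\eta)$, and the Meyers--Serrin approximation of \Cref{MeySer} can all be dispensed with. Your extra steps do no harm: the auxiliary function is indeed Carath\'eodory, smooth functions are Euclidean Sobolev so \eqref{euclrapprgrad} applies to them, and a.e.\ limits of measurable $[0,\infty]$-valued functions are measurable; you simply import density machinery whose role in this paper is reserved for the integral representation itself (Step 3 of \Cref{2411modiofmythm24}), not for this elementary lemma.
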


\section{Integral representation in the \eqref{revisiogiutraslinv} setting}\label{mainsection}
In this section we show how to prove the anisotropic integral representation result.
\subsection{The Euclidean result}\label{seceuclappr}
As point out in the introduction, the first step consists in exploiting the Euclidean integral representation result. To this aim, we prove it for the sake of completeness. The following theorem has been proved in \cite{MR0794824}, and can be found also in \cite{MR1201152}. \begin{theorem}\label{2611bdmtheoremnotdeponu}
Let $1\leq p<\infty$.
	Let $F:L^p(\Om)\times\mathcal{A}\to [0,\infty]$ be such that:
	\begin{enumerate}
		\item [1.]$F$ is a measure;
		\item [2.]$F$ is local;
		\item [3.]$F$ is translation-invariant;
		\item [4-]There exists $a\in L^1_{loc}(\Omega)$ and $b>0$ such that
        \begin{equation}\label{euclboundinthm}
             F(u,A)\leq \int_{A}a(x)+b|Du|^p\, dx\qquad\text{for any  $A\in\mathcal{A}$ and any $u\in \sol{A}\cap L^p(\Om)$}.
        \end{equation}
		\item [5.] $F$ is $L^p$-lower semicontinuous.
	\end{enumerate}
	Then there exists a Carathéodory function $f_e:\Omega\times\mathbf{R}^n\to [0,\infty)$ such that
	\begin{equation}\label{2611tesibdmnou1}
		\xi\mapsto f_e(x,\xi) \text{ is convex for } a.e.\,x\in\Om,
	\end{equation}
	\begin{equation}\label{2611tesibdmnou2}
		f_e(x,\xi)\leq a(x)+b|\xi|^p\quad\text{for }a.e.\,x\in\Om,\,\text{ for any }\xi\in \bf R^n ,
	\end{equation}
	and 
	\begin{equation}\label{2611tesibdmnou3}
		F(u,A)=\int_{A}f_e(x,Du)\,dx
	\end{equation}
	for any $A\in\mA$ and any $u\in L^p(\Om)\cap W_{loc}^{1,p}(A)$.
\end{theorem}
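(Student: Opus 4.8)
The goal is the classical Buttazzo--Dal Maso integral representation theorem, specialized to the translation-invariant case, and the strategy follows the three-step scheme recalled in the introduction. The plan is to first establish the representation on affine functions by exploiting the measure property together with translation invariance; then to pass to piecewise affine functions using the additivity of the measure $F(u,\cdot)$ over the pieces; and finally to extend to all of $W^{1,p}_{\rm loc}(A)\cap L^p(\Om)$ by combining the density of piecewise affine functions (\Cref{3011ektem2}) with the $L^p$-lower semicontinuity and the upper bound \eqref{euclboundinthm}, after having first upgraded lower semicontinuity to weak lower semicontinuity via a standard convexification/relaxation argument.

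\textbf{Step 1: definition of the Lagrangian and the affine case.} For $x\in\Om$ and $\xi\in\R^n$, I would set $u_\xi(y):=\langle\xi,y\rangle$ and define, for $A'\in\mA_0$ containing $x$,
\begin{equation*}
f_e(x,\xi):=\limsup_{r\to 0^+}\frac{F(u_\xi,B_r(x))}{|B_r(x)|},
\end{equation*}
i.e. the upper derivative of the Radon measure $F(u_\xi,\cdot)$ with respect to Lebesgue measure (here \Cref{2511alfaisradon} and the bound \eqref{euclboundinthm} guarantee that $F(u_\xi,\cdot)$ is a Radon measure). By the Besicovitch differentiation theorem, for each fixed $\xi$ this $\limsup$ is in fact a limit for a.e.\ $x$, it equals the Radon--Nikodym density of the absolutely continuous part, and the bound \eqref{euclboundinthm} forces $F(u_\xi,\cdot)\ll\mathcal L^n$ with density $\le a(x)+b|\xi|^p$, giving \eqref{2611tesibdmnou2} and in particular finiteness. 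Translation invariance is what makes $f_e$ depend only on $\xi=Du_\xi$ and not on the affine function's additive constant. Running this simultaneously over a countable dense set of $\xi$'s and then arguing by continuity (using local equi-Lipschitz bounds in $\xi$ coming from convexity, established below) yields a single Carathéodory $f_e$ such that $F(u_\xi,A)=\int_A f_e(x,\xi)\,dx$ for every affine $u_\xi$ and every $A\in\mA$; localizing by the measure property this extends to piecewise affine $u$, since on each piece $\Om_i$ one has $F(u,\Om_i\cap A)=\int_{\Om_i\cap A}f_e(x,Du)\,dx$ and $F(u,\cdot)$ adds up over the finite partition (the negligible set $N$ carries no mass because $F(u,\cdot)$ is absolutely continuous, again by \eqref{euclboundinthm}).

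\textbf{Step 2: convexity and weak lower semicontinuity.} Convexity of $\xi\mapsto f_e(x,\xi)$, claim \eqref{2611tesibdmnou1}, is obtained by testing $F$ against suitable piecewise affine ``sawtooth'' functions that oscillate between two affine slopes $\xi_1,\xi_2$ with volume fractions $t$ and $1-t$: such a function converges in $L^p$ to the affine function of slope $t\xi_1+(1-t)\xi_2$ on any fixed $A\Subset\Om$, so $L^p$-lower semicontinuity of $F$ combined with Step 1 gives $f_e(x,t\xi_1+(1-t)\xi_2)\le t f_e(x,\xi_1)+(1-t)f_e(x,\xi_2)$ after integrating and using a localization/Lebesgue-point argument in $x$. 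Once $f_e$ is convex with the growth bound \eqref{2611tesibdmnou2}, the functional $u\mapsto\int_A f_e(x,Du)\,dx$ is weakly lower semicontinuous on $W^{1,p}(A)$ by the standard De Giorgi--Ioffe--Serrin lower semicontinuity theorem; this will be needed in the final step.

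\textbf{Step 3: passage to general Sobolev functions.} Fix $A\in\mA$ and $u\in W^{1,p}_{\rm loc}(A)\cap L^p(\Om)$. For the inequality $F(u,A)\le\int_A f_e(x,Du)\,dx$: by inner regularity of $F(u,\cdot)$ it suffices to bound $F(u,A')$ for $A'\Subset A$; apply \Cref{3011ektem2} to get piecewise affine $u_h\to u$ strongly in $W^{1,p}(A')$, whence $Du_h\to Du$ in $L^p(A';\R^n)$ and $u_h\to u$ in $L^p$, so by $L^p$-lower semicontinuity and Step 1,
\begin{equation*}
F(u,A')\le\liminf_h F(u_h,A')=\liminf_h\int_{A'}f_e(x,Du_h)\,dx=\int_{A'}f_e(x,Du)\,dx,
\end{equation*}
the last equality using \eqref{2611tesibdmnou2} and the convergence $Du_h\to Du$ in $L^p$ together with a continuity-of-Nemytskii-operator argument (or Vitali, via the equi-integrability supplied by the $L^p$-convergence and the growth bound). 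Taking the supremum over $A'\Subset A$ gives ``$\le$''. For the reverse inequality: here one cannot use strong approximation directly since $F$ need not be upper semicontinuous, so instead I would argue that the set function $B\mapsto F(u,B)$, which is a Radon measure, has a density $g(x)$ with respect to $\mathcal L^n$, and show $g(x)\ge f_e(x,Du(x))$ a.e.; at a Lebesgue point $x_0$ of $Du$ one ``blows up'', comparing $F(u,B_r(x_0))$ with $F(u_{x_0},B_r(x_0))$ where $u_{x_0}$ is the affine tangent map, using locality and the already-proved ``$\le$'' inequality applied to the difference, plus the weak lower semicontinuity from Step 2 to control error terms; this yields $g(x_0)\ge f_e(x_0,Du(x_0))$ and hence $F(u,A)\ge\int_A f_e(x,Du)\,dx$. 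Combining the two inequalities proves \eqref{2611tesibdmnou3}.

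\textbf{Main obstacle.} The delicate point is Step 3's lower bound $F(u,A)\ge\int_A f_e(x,Du)\,dx$: unlike the upper bound, it cannot be read off from strong density of piecewise affine functions, and it requires a careful blow-up/localization argument at Lebesgue points of $Du$, in which locality of $F$, the measure structure (Radon--Nikodym differentiation), translation invariance, and the weak lower semicontinuity furnished by convexity all have to be orchestrated simultaneously — together with verifying that the error terms produced by replacing $u$ by its affine tangent on small balls are genuinely negligible in the limit. The measurability/Carathéodory regularity of the assembled $f_e$ (patching together the a.e.-defined densities over a countable dense set of slopes into a jointly measurable, fibrewise continuous function) is a secondary technical nuisance, handled by the local Lipschitz continuity in $\xi$ coming from convexity plus the growth bound.
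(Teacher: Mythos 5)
Your Steps 1--2 and the first half of Step 3 coincide with the paper's proof: the same definition of $f_e$ as the density of the Radon measure $F(\varphi_\xi,\cdot)$, extension to affine and piecewise affine functions via translation invariance, locality and the measure property, the zig-zag construction for convexity, and the inequality $F(u,A')\leq\int_{A'}f_e(x,Du)\,dx$ via \Cref{3011ektem2} and $L^p$-lower semicontinuity. The divergence, and the genuine gap, is in your treatment of the reverse inequality. The blow-up scheme you sketch cannot be implemented with the stated hypotheses: $F$ is an abstract local functional with no covariance under dilations or translations in $x$, so $F(u,B_r(x_0))$ cannot be rescaled to a fixed domain, and the $L^p$-lower semicontinuity of $F$ only ever produces estimates of the form $F(v,A)\leq\liminf_h F(v_h,A)$ for the \emph{limit} function, i.e. upper bounds on $F$; it never yields the lower bound $F(u,B_r)\gtrsim F(\varphi_{Du(x_0)},B_r)-o(r^n)$ you need. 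Moreover, the phrase ``the already-proved $\leq$ inequality applied to the difference'' does not correspond to any available operation: $F$ is neither additive nor subadditive in its first variable, so there is no representation or estimate for ``$F(u,\cdot)-F(u_{x_0},\cdot)$'', and the weak lower semicontinuity you establish in Step 2 concerns the integral functional $v\mapsto\int f_e(x,Dv)\,dx$, not $F$, hence it cannot control $F(u,B_r)$ from below either. Gluing $u$ to its affine tangent near $\partial B_r$ and using \eqref{euclboundinthm} on the transition annulus only bounds $F$ of the glued function from above; nothing in the hypotheses says that the affine function minimizes $F(\cdot,B_r)$ among competitors with its boundary values, which is what a blow-up comparison would require.

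The paper closes this step by a perturbation (shift) argument instead. Fix $u_0\in\so{A'}$ and set $H(v,A):=F(v+u_0,A)$; one checks that $H$ satisfies hypotheses 1--5 again (for 4 one uses $|Dv+Du_0|^p\leq 2^{p-1}(|Dv|^p+|Du_0|^p)$, so the new weight is $a+2^{p-1}b|Du_0|^p\in L^1_{loc}$). Applying to $H$ only the parts of the theorem already proved --- the representation on piecewise affine functions by some Carath\'eodory integrand $h$ with convexity and $p$-growth, and the inequality $H(v,A')\leq\int_{A'}h(x,Dv)\,dx$ --- and taking piecewise affine $u_h\to u_0$ in $\so{A'}$, one obtains the closed chain
\begin{equation*}
\int_{A'}h(x,0)\,dx=H(0,A')=F(u_0,A')\leq\int_{A'}f_e(x,Du_0)\,dx=\lim_h F(u_h,A')=\lim_h H(u_h-u_0,A')\leq\int_{A'}h(x,0)\,dx,
\end{equation*}
where the last step uses convexity and growth of $h$ to pass to the limit $Du_h-Du_0\to 0$ in $L^p$. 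Since the chain starts and ends with the same finite quantity, all inequalities are equalities, and in particular $F(u_0,A')=\int_{A'}f_e(x,Du_0)\,dx$; the general case $u\in W^{1,p}_{loc}(A)\cap L^p(\Om)$ then follows by inner regularity of the measure $F(u,\cdot)$, as in your plan. You should replace your blow-up step by this argument (or supply a comparison property that the hypotheses do not currently give you).
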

\begin{proof}
		\textbf{Step 1.} We begin defining $f_e$. Fix $x\in\Om$ and $\xi\in \bf R^n $, and consider the linear function $\varphi_\xi(y):=\langle\xi,y\rangle$. Define 
		$f_e:\Om\times \bf R^n \to [0,\infty]$ by 
		\begin{equation*}
			f_e(x,\xi):=\limsup_{R\to 0^+}\frac{F(\varphi_{\xi},B_R(x))}{|B_R(x)|}.
		\end{equation*}
		By $1,4$ and \Cref{2511alfaisradon}, $F(\varphi_{\xi},\cdot)$ is a Radon measure, absolutely continuous with respect to the Lebesgue measure. By Lebesgue's differentiation theorem, $y\mapsto f_e(y,\xi)$ is in $L^1_{loc}(\Om)$, and
		\begin{equation}\label{2911affinedmnou}
				F(\varphi_{\xi},A)=\int_{A}f_e(y,\xi)\,dy
				=\int_{A}f_e(y,D\varphi_{\xi})\,dy\qquad\text{   for any $A\in\mA$.}
		\end{equation}
             \textbf{Step 2.} We prove \eqref{2611tesibdmnou2}.
	Fix a Lebesgue point $x$ of $a$.
	Let  $\xi\in \bf R^n $ and $\varphi_\xi$ be as above. By $4$,
	\begin{equation*}
		F(\varphi_{\xi},B_R(x))\leq\int_{B_R(x)}a(y)+b|\xi|^p\,dy.
	\end{equation*}
Therefore, \eqref{2611tesibdmnou2} follows dividing by $|B_R(x)|$ and letting $R\to 0^+$. Moreover, up to modifying $f_e$ on a set of measure zero, we can assume that it is finite everywhere. \\
	\textbf{Step 3.} We extend the class of functions for which the integral representation holds. First, by $3$, \eqref{2911affinedmnou} can be extended to every affine function. 
	Let $u$ be a piecewise affine function and $A\in\mA$, and consider the partition $\{\Om'\,\ldots,\Om'_s,N\}$ associated with $u$.
	For any $i=1,\ldots,s$, take an affine function $\varphi_i$ coinciding with $u$ on $\Om_i$. By $1$, $2$ and the absolute continuity of $A\mapsto F(u,A)$ with respect to the Lebesgue measure,
	\begin{equation*} 
		\begin{split}
			F(u,A') 
			 = \sum_{i=1}^{s}F(\varphi_i,A'\cap \Om_i) = \sum_{i=1}^{s}\int_{A'\cap\Om_i}f_e(y,D\varphi_i)\,dy
			 = \int_{A'}f_e(y,Du)\,dy.
		\end{split}
	\end{equation*}
	Therefore
	\eqref{2911affinedmnou} can be extended to every piecewise affine function.\\
\textbf{Step 4.} We prove \eqref{2611tesibdmnou1}. Its proof is known in the literature as \emph{zig-zag lemma} (cf. \cite[Lemma 20.2]{MR1201152}).
	Let $x$ be a Lebesgue point for a. Let $t\in (0,1)$, $\xi_1\neq\xi_2$ in $ \bf R^n $ and $R>0$, and set $\xi:=t\xi_1+(1-t)\xi_2$. By definition of $f_e$, it suffices to show that	\begin{equation*}\label{2611zigzag1}
		F(\varphi_\xi,B_R(x))\leq tF(\varphi_{\xi_1},B_R(x))+(1-t)F(\varphi_{\xi_2},B_R(x))
	\end{equation*}
	when $R$ is small. Define $
		\xi_0:=\frac{\xi_2-\xi_1}{|\xi_2-\xi_1|}$,
	and, for any $h\in\mathbf{N}$ and for any $k\in\mathbf{Z}$, set
	\begin{equation*}
	E^1_{h,k}:=\varphi_{\xi_0}^{-1}\left(\left[\frac{k-1}{h},\frac{k-1}{h}+\frac{t}{h}\right)\right),\quad E^1_h:=\bigcup_{k\in\mathbf{Z}}E^1_{h,k},\quad	\Om^1_{h,k}:=\Om\cap E^1_{h,k},\quad \Om^1_h:=\bigcup_{k\in\mathbf{Z}}\Om^1_{h,k}
	\end{equation*}
    and
	\begin{equation*}
		E^2_{h,k}:=\varphi_{\xi_0}^{-1}\left(\left[\frac{k-1}{h}+\frac{t}{h},\frac{k}{h}\right)\right),\quad E^2_h:=\bigcup_{k\in\mathbf{Z}}E^2_{h,k},\quad	\Om^2_{h,k}:=\Om\cap E^2_{h,k},\quad \Om^2_h:=\bigcup_{k\in\mathbf{Z}}\Om^2_{h,k}
	\end{equation*}
	Since $\xi_0\neq 0$, we claim that 
   \begin{equation}\label{claimdisegni}
        \text{$\chi_{\Om^1_h}\to t$ and $\chi_{\Om^2_h}\to1-t$ weakly-$^\star$ in $L^\infty(\Om)$.}
    \end{equation}
    We just sketch the proof of \eqref{claimdisegni}, focusing on the first property.
    \begin{enumerate}
        \item Let $Q\subseteq \bf R^n $  be closed $n$-cube with one edge parallel to $\xi_0$. Then
\begin{equation}\label{803q}
	|E^1_h\cap Q|\to t|Q|\qquad\text{as $h\to\infty$}.
\end{equation}
\item Let $A\in\mA$. Then
\begin{equation}\label{803secondstep}
	|E^1_h\cap A|\to t|A|\qquad\text{as $h\to\infty$}.
\end{equation}
The proof of \eqref{803secondstep} follows covering $A$ with $n$-cubes as in the previous step, and combining \eqref{803q} with a Vitali covering argument.
\item 
Let $\varphi$ be a piecewise constant function over $\Om$. Then \eqref{803secondstep} implies that
\begin{equation*}\label{803thirdstep}
	\lim_{h\to \infty }\int_{\Om^1_h}\varphi\, dx =t\int_{\Om}\varphi\, dx.
\end{equation*}
\item The claim follows since piecewise constant functions are dense in $L^1(\Om)$. 
    \end{enumerate}
	Define 
	\begin{equation*}
		c^1_{h,k}:=(1-t)\frac{k-1}{h}|\xi_2-\xi_1|\qquad\text{and}\qquad	c^2_{h,k}:=-t\frac{k}{h}|\xi_2-\xi_1|,
	\end{equation*}
	and set
	\begin{equation*}
		u_h(y):=
		\begin{cases}c^1_{h,k}+(\xi_1,y)&\text{ if }y\in\Om^1_{h,k}\\
			c^2_{h,k}+(\xi_2,y)&\text{ if }y\in\Om^2_{h,k}
		\end{cases}
		\,.
	\end{equation*}
	By definition,
	\begin{equation*}
		\begin{split}
			|u_h(y)-\varphi_\xi(y)|\leq \frac{t(1-t)}{h}|\xi_2-\xi_1|
		\end{split}
	\end{equation*}
     for any $y\in\Om$.
	In particular, $u_h\to \varphi_\xi$ uniformly on $\Om$, whence $u_h\to \varphi_\xi$ strongly in $L^p(\Om)$. Since $(u_h)_h$ and $\varphi_\xi$ are piecewise affine, and recalling the previous step,
	\begin{equation*}
		\begin{split}
			F(\varphi_\xi,B_R(x)) & \overset{5}{\leq}\liminf_{h\to \infty} F(u_h,B_R(x))\\
            &=			  \liminf_{h\to \infty}\int_{\Om}f_e(y,Du_h)\,dy\\
	&	\overset{\eqref{claimdisegni}}{=}		 t\int_{\Om^1_h\cap B_R(x)}f_e(y,\xi_1)\,dy+(1-t)\int_{\Om^1_h\cap B_R(x)}f_e(y,\xi_2)\,dy\\
			& = tF(\varphi_{\xi_1},B_R(x))+(1-t)F(\varphi_{\xi_2},B_R(x)).
		\end{split}
	\end{equation*}
    \textbf{Step 5.}
	We show that
	\begin{equation}\label{2911soreprnoubdm}
		F(u,A')=\int_{A'}f_e(x,Du)\,dx
	\end{equation}
	for any $A'\in\mA_0$ and any $u\in W^{1,p}(A')\cap L^p(\Om)$. By \Cref{3011ektem2} there exists a sequence $(u_h)_h$ of piecewise affine functions such that 
	\begin{equation}\label{2911patosobdm}
		u_h\to u\text{ strongly in }\so{A'}.
	\end{equation}
	Moreover, the functional
	\begin{equation*}
		G_{A'}:\left(\left\{u|_{A'}\,:\,u\in\so{A'}\right\},\|\cdot\|_{\so{A'}}\right)\to[0,\infty),\qquad G_{A'}(u):=\int_{A'}f_e(x,Du)\,dx
	\end{equation*}
 is convex and bounded on bounded sets on $\so{A'}$, whence it is continuous.
	Therefore
	\begin{equation} \label{disuginproofeucli}
		F(u,A')\overset{5}{\leq}\liminf_{h\to \infty }F(u_h,A')
		= \liminf_{h\to \infty }\int_{A'}f_e(x,Du_h)\,dx =\int_{A'}f_e(x,Du)\,dx.
	\end{equation}
	We prove the converse inequality.
	Fix $u_0\in\so{A'}$, and define  $H: L^p(\Om)\times\mA\to [0,\infty]$ by $H(u,A):=F(u+u_0,A)$. Then $H$ satisfies all the hypotheses of \Cref{2611bdmtheoremnotdeponu}. Therefore, there exists a Carathéodory function $h:\Omega\times\mathbf{R}^n\to  [0,\infty)$ such that
	\begin{equation}\label{2911auxfunnou0}
		\xi\mapsto h(x,\xi)\text{ is convex for a.e. }x\in\Om,
	\end{equation}
	\begin{equation}\label{2911auxfunnou1}
		h(x,\xi)\leq a_H(x)+b_H|\xi|^p\quad\text{for }a.e.x\in\Om \text{ and any }\xi\in \bf R^m 
	\end{equation}
	for some $a_H\in\lul$ and $b_H>0$, 
	\begin{equation}\label{2911auxfunnou2}
		H(u,A)=\int_{A}h(x,Du)\,dx\qquad\text{ for any $A\in\mA$ and any $u$ piecewise affine},
	\end{equation}
	and
	\begin{equation}\label{2811auxfunnou3}
		H(u,A')\leq\int_{A'}h(x,Du)\,dx\quad\text{ for any $u\in\so{A'}\cap L^p(\Om)$}.
	\end{equation}
	Let $(u_h)_h$ be a sequence of piecewise affine functions converging to $u_0$ strongly in $\so{A'}$. Then $(u_0-u_h)_h$ converges to $0$ strongly in $\so{A'}$. Arguing as before, \eqref{2911auxfunnou0} and \eqref{2911auxfunnou1} imply
	\begin{equation}\label{hfuninproof}
	\int_{A'}h(x,0)\,dx=\lim_{h\to\infty }\int_{A'}h(x,Du_0-Du_h).
	\end{equation}
	Thus we conclude that 
	\begin{equation*} 
		\begin{split}
			\int_{A'}h(x,0)\,dx & \overset{\eqref{2911auxfunnou2}}{=} H(0,A') \\
            &= F(u_0,A')\\
			&\overset{\eqref{disuginproofeucli}}{\leq} \int_{A'}f_e(x,Du_0)\,dx\\
            &=\lim_{h\to\infty }\int_{A'}f_e(x,Du_h)\,dx\\ 
			&= \lim_{h\to\infty }F(u_h,A')
			\\
            &= \lim_{h\to\infty }H(u_0-u_h,A')\\
			&\overset{\eqref{2811auxfunnou3}}{=} \lim_{h\to \infty }\int_{A'}h(x,Du_0-Du_h)\\
            &\overset{\eqref{hfuninproof}}{=}\int_{A'}h(x,0).	
		\end{split}
	\end{equation*}
	In particular, \eqref{2911soreprnoubdm} is proved.\\	
    \textbf{Step 6.}
	Fix $A\in\mA$ and $u\in L^p(\Om)\cap W^{1,p}_{loc}(A)$. Let $A'\in\mA$, $A'\Subset A$. By \eqref{2911soreprnoubdm},
	\begin{equation*} 
		\begin{split}
			F(u,A') = \int_{A'}f_e(x,Du)\,dx.
		\end{split}
	\end{equation*}
Since $F(u,\cdot)$ is a measure, we conclude that 
\begin{equation*}
	\begin{split}
		F(u,A)=\sup\{F(u,A')\,:\,A'\Subset A\}=\sup\left\{\int_{A'}f_e(x,Du)\,dx\,:\,A'\Subset A\right\}=\int_{A}f_e(x,Du)\,dx.
	\end{split}
\end{equation*}
whence \eqref{2611tesibdmnou3} holds.
\end{proof}
\begin{remark}\label{603cambiohpsforsso}
	\Cref{2611bdmtheoremnotdeponu}  still holds if we substitute $5$ with the following two conditions:
	\begin{itemize}
		\item[5$'$.]$F$ is weakly-* sequentially lower semicontinuous;
		\item [5$''$.]$F$ is $W^{1,p}$-lower semicontinuous.
	\end{itemize}
The advantage of requiring weak-* lower semicontinuity and $W^{1,p}$-lower semicontinuity instead of $L^p$-lower semicontinuity is that the formers are both necessary (cf. \cite[Theorem II.1]{MR0751305} for the necessity of weak-$^\star$ sequential lower semicontinuity), while the latter may fail. Indeed, a famous counterexample due to 
Aronszajn (cf. \cite[p. 54]{pauc__1941}) and exploited by Dal Maso in \cite[Example 4.1]{MR0567216}, shows the existence of a Carathéodory function $f_e:(0,1)^2\times\mathbf{R}^2\to [0,\infty)$ such that 
$$\xi\mapsto f_e(x,\xi)\text{ is convex for any }x\in(0,1)^2$$
and 
$$f_e(x,\xi)\leq1+\sqrt{2}|\xi|^p\qquad\text{for any $(x,\xi)\in(0,1)^2\times\mathbf{R}^2$},$$
and such that, if $F$ is its associated integral functional, there exist $u\in\so{(0,1)^2}$ and a sequence $(u_h)_h\subseteq\so{(0,1)^2}$ such that
\begin{equation*}\label{3003contraddiction}
	\lim_{h\to\infty}\Vert u_h-u\Vert_{L^p(\Om)}=0\quad\text{ and }\quad F(u,\Om)>\liminf_{h\to\infty} F(u_h,\Om).
\end{equation*}
\end{remark}
\subsection{From Euclidean to anisotropic representation}\label{pseudosection}
Here we show how to pass from an Euclidean representation, i.e. with respect to an Euclidean Lagrangian $f_e$, to an anisotropic representation, establishing conditions under which \eqref{primo} holds. To this aim, we begin with some linear algebra preliminaries. We refer to \cite{withoutlic} for the results of \Cref{pseudosection}. Following the notation of \cite{MR4609808,MR4108409}, for any $x\in\Om$ we define the linear map $\C(x): \bf R^n \to  \bf R^m $ by
	\begin{equation*}\label{Lx}
		\C(x)(\xi)=\C(x)\cdot\xi
	\end{equation*}
	for any $\xi\in\rr^n$. Moreover, we let
\begin{equation*}\label{N&Vx}
		N_x=\ker(\C(x))\qquad\text{and}\qquad V_x=\left\{\C(x)^T \cdot\eta\ :\ \eta\in\mathbf{R}^m\right\}.
	\end{equation*}
	From standard linear algebra (cf. e.g. \cite{MR0575349}), we know that $ \bf R^n =N_x\oplus V_x$. Hence, for any $x\in\Om$ and $\xi\in \bf R^n $, there are uniqe $\xi_{N_x}\in N_x$ and $\xi_{V_x}\in V_x$ such that
	\begin{equation}\label{splitting}
		\xi=\xi_{N_x}+\,\xi_{V_x}.
	\end{equation}
Therefore, the map $\Pi_x: \bf R^n \to V_x$ defined by $\Pi_x(\xi)=\xi_{V_x}$ is well-posed. The authors of \cite{MR4609808,MR4108409} exploited \eqref{revisiogiulic} to ensure the existence of a right-inverse map associated to $\C(x)$. Precisely, if $X_1(x),\ldots,X_m(x)$ are linearly independent at some $x\in\Om$, then any $\eta\in\rr^m$ can be expressed in the form $\eta=\C(x)\cdot\xi_\eta$ for some $\xi_\eta\in\rr^n$. In the general case, we decompose $\eta\in\rr^m$ as
\begin{equation*}
    \eta=\C(x)\cdot\xi_\eta+\eta^\perp,
\end{equation*}
where $\eta^\perp\in \im(\C(x))^\perp.$ We stress that $\xi_\eta$ is uniquely defined only  modulo $\ker(\C(x))$.
    Let $\Cm:\Om\to  M(n,m)$ be such that $\Cm(x)$ is the \emph{Moore-Penrose pseudo-inverse} of $\C(x)$ (cf. \cite{MR1417720}) for any $x\in\Om$. Precisely, for a fixed $x\in\Om$, $\Cm(x)$ is the unique matrix in $M(n,m)$ such that
    \begin{equation} \label{pseudoinvproperties}
    \begin{aligned}
        &\Cm(x)\cdot\C(x)\cdot\Cm(x)=\Cm(x),\qquad \C(x)\cdot\Cm(x)\cdot\C(x)=\C(x),\\
        &\Cm(x)\cdot\C(x)=\C(x)^T\cdot\Cm(x)^T,\qquad \C(x)\cdot\Cm(x)=\Cm(x)^T\cdot\C(x)^T.
    \end{aligned}
    \end{equation}
The core of our proof is encoded in the following proposition.
\begin{proposition}\label{alglemma}
   For any $x\in\Om$, let $\Cm(x):\rr^m\to  \rr^n$ be the linear map defined by 
    \begin{equation*}
        \Cm(x)(\eta)=\Cm(x)\cdot\eta.
    \end{equation*}
 Then the map
    \begin{equation*}
        x\mapsto\Cm(x)(\eta)
    \end{equation*}
    is measurable for any $\eta\in\rr^m$. Moreover, for any $x\in\Om$, the following facts hold.
    \begin{itemize}
        \item [1.] $\im(\Cm(x))= V_x$.
        \item[2.] $\Pi_x(\xi)=\Cm(x)\cdot\C(x)\cdot\xi$ for any $\xi\in\rr^n$. 
        \item [3.] $\ker(\Cm(x))=\im(\C(x))^\perp$.
    \end{itemize}
\end{proposition}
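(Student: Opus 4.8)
The plan is to reduce the three algebraic assertions to the four defining identities \eqref{pseudoinvproperties} of the Moore--Penrose pseudo-inverse together with the fundamental theorem of linear algebra, and to settle the measurability by a regularization formula. Let me first dispose of measurability: since the $c_{j,i}$ are Lipschitz, $x\mapsto\C(x)$ is continuous, and for every $\delta>0$ the matrix $\C(x)^T\C(x)+\delta I_n$ is symmetric positive definite, hence invertible with $x\mapsto(\C(x)^T\C(x)+\delta I_n)^{-1}$ continuous (Cramer's rule). Thus $x\mapsto(\C(x)^T\C(x)+\delta I_n)^{-1}\C(x)^T$ is continuous for each $\delta>0$, and I would invoke the classical Tikhonov regularization identity (see e.g. \cite{MR1417720})
\[
\Cm(x)=\lim_{\delta\to 0^+}\big(\C(x)^T\C(x)+\delta I_n\big)^{-1}\C(x)^T,\qquad x\in\Om.
\]
A pointwise limit of continuous matrix-valued maps is Borel measurable, whence $x\mapsto\Cm(x)$, and therefore $x\mapsto\Cm(x)(\eta)=\Cm(x)\cdot\eta$, is measurable. (Alternatively, one may stratify $\Om$ by the finitely many values of $\rank\C(x)$ and argue by continuity on each stratum; the regularization argument is cleaner.)

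For the algebraic part, fix $x\in\Om$ and set $P:=\Cm(x)\cdot\C(x)\in M(n,n)$ and $Q:=\C(x)\cdot\Cm(x)\in M(m,m)$. The third and fourth identities in \eqref{pseudoinvproperties} say exactly that $P$ and $Q$ are symmetric, while $\Cm\C\Cm=\Cm$ gives $P^2=(\Cm\C\Cm)\C=\Cm\C=P$ and, symmetrically, $\C\Cm\C=\C$ gives $Q^2=Q$; hence $P$ and $Q$ are orthogonal projections of $\rr^n$ and of $\rr^m$ respectively. I would also recall here that $N_x=\ker(\C(x))=\im(\C(x)^T)^\perp=V_x^\perp$, so that the splitting \eqref{splitting} is in fact the orthogonal decomposition $\rr^n=V_x\oplus V_x^\perp$ and $\Pi_x$ is the orthogonal projection onto $V_x$.

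Now the three items follow by identifying ranges and kernels. Since $\Cm\C=\C^T\Cm^T$ by the third identity, $\im(P)=\im(\C^T\Cm^T)\subseteq\im(\C^T)=V_x$; conversely, for $v=\C(x)^T\eta\in V_x$ one computes $Pv=\Cm\C\C^T\eta=\C^T\Cm^T\C^T\eta=\C^T\eta=v$, using the third identity and the transpose of the second, so $V_x\subseteq\im(P)$. Hence $\im(P)=V_x$, and, $P$ being an orthogonal projection, $\ker(P)=\im(P)^\perp=V_x^\perp=N_x$; therefore $P$ is the orthogonal projection onto $V_x$ along $N_x$, i.e. $P=\Pi_x$, which is item 2. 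Item 1 is then immediate: $\Cm=(\Cm\C)\Cm$ gives $\im(\Cm)=\im(\Cm\C)=\im(P)=V_x$. For item 3, $\C\Cm\C=\C$ gives $\im(\C)\subseteq\im(\C\Cm)=\im(Q)\subseteq\im(\C)$, so $\im(Q)=\im(\C)$ and, $Q$ being an orthogonal projection, $\ker(Q)=\im(\C)^\perp$; finally $\Cm=\Cm(\C\Cm)$ yields $\ker(\Cm)=\ker(\C\Cm)=\ker(Q)=\im(\C)^\perp$ (for the nontrivial inclusion: $\C\Cm\eta=0$ implies $\Cm\eta=\Cm\C\Cm\eta=0$).

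As for the main obstacle: the algebraic steps are essentially bookkeeping with the Moore--Penrose axioms, so the only point requiring an actual idea is the measurability, for which the Tikhonov regularization identity (or the rank stratification of $\Om$) is the key input. The one conceptual subtlety to flag is that the decomposition \eqref{splitting} is \emph{orthogonal}, $N_x=V_x^\perp$; this is precisely what forces $\Cm(x)\cdot\C(x)$ to coincide with the geometric projection $\Pi_x$ rather than with some oblique projection, and hence is what ties the pseudo-inverse to the splitting used throughout \Cref{pseudosection}.
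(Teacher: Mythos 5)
Your proof is correct, and the measurability part is exactly the paper's argument: both use the Tikhonov-type limit $\Cm(x)=\lim_{\delta\to0^+}(\C(x)^T\C(x)+\delta I_n)^{-1}\C(x)^T$ and the fact that a pointwise limit of continuous matrix-valued maps is Borel. For the algebraic items the route is genuinely, if modestly, different. You observe up front that $N_x=\ker(\C(x))=\im(\C(x)^T)^\perp=V_x^\perp$, so that the splitting \eqref{splitting} is orthogonal, and that $P=\Cm(x)\C(x)$ and $Q=\C(x)\Cm(x)$ are symmetric idempotents, i.e.\ orthogonal projections; items 1--3 then drop out of range/kernel identifications ($\im(P)=V_x$ gives $P=\Pi_x$; $\im(Q)=\im(\C(x))$ gives $\ker(\Cm(x))=\ker(Q)=\im(\C(x))^\perp$). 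The paper never invokes the orthogonality of the decomposition: it proves item 2 by showing $\C(x)\Cm(x)\C(x)\xi=\C(x)\Pi_x(\xi)$ and using the injectivity of $\C(x)$ on $V_x$, and it obtains the reverse inclusion $\im(\C(x))^\perp\subseteq\ker(\Cm(x))$ in item 3 by a contradiction argument based on the injectivity of $\Cm(x)$ on $\ker(\Cm(x))^\perp$. Your packaging via orthogonal projections is cleaner and replaces the paper's contradiction step by a one-line kernel computation; the paper's version, on the other hand, only uses the direct-sum structure $\rr^n=N_x\oplus V_x$ as stated in \Cref{pseudosection} and so does not need to make the orthogonality of the splitting explicit. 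Both rest on the same four Moore--Penrose identities \eqref{pseudoinvproperties}, so the difference is one of organization rather than of substance.
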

\begin{proof}
    For a given $\eta\in\rr^n$, it is well-known (cf. \cite{MR1417720}) that 
    \begin{equation*}
        \Cm(x)\cdot\eta=\lim_{h\to\infty }\left(\C(x)^T\cdot\C(x)+\frac{1}{h}I_n\right)^{-1}\cdot\C(x)^T\cdot\eta.
    \end{equation*}
    for any $x\in\Om$. In particular, being $\C$ continuous over $\Om$, $x\mapsto\Cm(x)\cdot\eta$ is the pointwise limit of continuous functions, and hence it is measurable. Now we fix $x\in\Om$. Notice that, by \eqref{pseudoinvproperties},
    \begin{equation*}
        \Cm(x)\cdot\eta=\Cm(x)\cdot\C(x)\cdot\Cm(x)\cdot\eta=\C(x)^T\cdot\left(\Cm(x)^T\cdot\Cm(x)\cdot\eta\right)
    \end{equation*}
    for any $\eta\in\rr^m$, so that $\im(\Cm(x))\subseteq V_x$. To prove the other inclusion, it suffices to show 2. To this aim, fix $\xi\in\rr^n$. by \eqref{splitting} and \eqref{pseudoinvproperties},
    \begin{equation*}
\C(x)\cdot\Cm(x)\cdot\C(x)\cdot\xi=\C(x)\cdot\xi=\C(x)\cdot\left(\Pi_x(\xi)+\xi_{N_x}\right)=\C(x)\cdot\Pi_x(\xi).
    \end{equation*}
   Since we already know that $\Cm(x)\cdot\C(x)\cdot\xi\in V_x$, and being $\C(x)$ injective on $V_x$, 2 follows. To prove 3, fix $\eta\in\ker(\Cm(x))$ and $\xi\in\rr^n$. Then, by \eqref{pseudoinvproperties},
   \begin{equation*}
       \begin{split}
\eta^T\cdot\C(x)\cdot\xi=\eta^T\cdot\C(x)\cdot\Cm(x)\cdot\C(x)\cdot\xi=\left(\Cm(x)\cdot\eta\right)^T\cdot\C(x)^T\cdot\C(x)\cdot\xi=0,
       \end{split}
   \end{equation*}
   so that $\eta\in\im(\C(x))^\perp$. Hence $\ker(\Cm(x))\subseteq\im(\C(x))^\perp$.  Assume by contradiction that there exists $\eta\neq 0$ such that $\eta\in\im(\C(x))^\perp\cap\ker(\C(x))^\perp$. In view of \eqref{pseudoinvproperties},
   \begin{equation}\label{cpinj}
       \Cm(x)\cdot\eta=\Cm(x)\cdot\C(x)\cdot\Cm(x)\cdot\eta.
   \end{equation}
   Since we know that $\ker(\Cm(x))\subseteq\im(\C(x))^\perp$, then $\im(\C(x))\subseteq\ker(\Cm(x))^\perp$, so that both $\eta$ and $\C(x)\cdot\Cm(x)\cdot\eta$ belongs to $\ker(\Cm(x))^\perp$. Being $\Cm(x)$ injective on $\ker(\Cm(x))^\perp$, we conclude from \eqref{cpinj} that $\eta=\C(x)\cdot\Cm(x)\cdot\eta$, a contradiction with $\eta\in\im(\C(x))^\perp$.
\end{proof}

In the next result, we show how to exploit \Cref{alglemma} to obtain \eqref{primo}.
\begin{proposition}\label{anisorappr}
    Let $f_e:\Om\times\rr^n\to [0,\infty]$ be a Carathéodory function.  Assume that 
\begin{equation}\label{0702b}
        f_e(x,\xi)=f_e(x,\Pi_x(\xi))
    \end{equation}
    for a.e. $x\in\Om$ and any $\xi\in\rr^n$. Define the map $f:\Om\times\rr^m\to [0,\infty]$ by
    \begin{equation}\label{fanisotrdef}
        f(x,\eta)=f_e(x,\Cm(x)\cdot\eta)
    \end{equation}
    for any $x\in\Om$ and any $\eta\in\rr^m$. Then $f$ is a Carathéodory function such that
    \begin{equation}\label{costonker}
        f(x,\eta)=f(x,\C(x)\cdot\xi_\eta)
    \end{equation}
    and
    \begin{equation}\label{0702h}
        f_e(x,\xi)=f(x,\C(x)\cdot\xi)
    \end{equation}
    for a.e. $x\in\Om$, any $\eta\in\rr^m$ and any $\xi\in\rr^n$. Moreover, $f$ enjoys the following properties.
    \begin{itemize}
        \item [1.] If there exist $a\in L^1_{loc}(\Om)$ and $b\geq 0$ such that
    \begin{equation}\label{0702g}
     f_e(x,\xi)\leq a(x)+b|\C(x)\cdot\xi|^p\qquad\text{ for a.e. $x\in\Om$ and any $\xi\in\rr^n$,}
    \end{equation}
    then
    \begin{equation}\label{0702i}
        f(x,\C(x)\cdot\xi)\leq a(x)+b|\C(x)\cdot\xi|^p\qquad\text{ for a.e. $x\in\Om$ and any $\xi\in\rr^n$.}
    \end{equation}
     \item [2.] If 
     \begin{equation}\label{feuclconv}
         f_e(x,\cdot)\text{ is convex}\qquad\text{ for a.e. $x\in\Om$,}
     \end{equation}
     then
     \begin{equation}\label{0802conv2}
         f(x,\cdot)\text{ is convex}\qquad\text{ for a.e. $x\in\Om$.}
     \end{equation}
    \end{itemize}
\end{proposition}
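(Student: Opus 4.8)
The plan is to verify each claimed property of $f$ in turn, relying systematically on \Cref{alglemma} together with the pseudo-inverse identities \eqref{pseudoinvproperties}. First I would check that $f$ is a Carathéodory function: measurability of $x\mapsto f(x,\eta)$ for fixed $\eta$ follows from the measurability of $x\mapsto\Cm(x)\cdot\eta$ (the first assertion of \Cref{alglemma}), composed with the Carathéodory property of $f_e$; continuity of $f(x,\cdot)$ for a.e.\ $x$ follows since $\eta\mapsto\Cm(x)\cdot\eta$ is linear (hence continuous) and $f_e(x,\cdot)$ is continuous for a.e.\ $x$. For \eqref{0702h}, the key identity is $\Pi_x(\xi)=\Cm(x)\cdot\C(x)\cdot\xi$ (part 2 of \Cref{alglemma}): then $f(x,\C(x)\cdot\xi)=f_e(x,\Cm(x)\cdot\C(x)\cdot\xi)=f_e(x,\Pi_x(\xi))=f_e(x,\xi)$ by the structural hypothesis \eqref{0702b}.

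Next, for \eqref{costonker}, I would use the decomposition $\eta=\C(x)\cdot\xi_\eta+\eta^\perp$ with $\eta^\perp\in\im(\C(x))^\perp=\ker(\Cm(x))$ (part 3 of \Cref{alglemma}), so that $\Cm(x)\cdot\eta=\Cm(x)\cdot\C(x)\cdot\xi_\eta$, and hence $f(x,\eta)=f_e(x,\Cm(x)\cdot\C(x)\cdot\xi_\eta)=f(x,\C(x)\cdot\xi_\eta)$ directly from the definition \eqref{fanisotrdef}. Property 1 is then immediate: combining \eqref{0702h} with hypothesis \eqref{0702g} gives $f(x,\C(x)\cdot\xi)=f_e(x,\xi)\leq a(x)+b|\C(x)\cdot\xi|^p$, which is exactly \eqref{0702i}. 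Property 2 (convexity of $f(x,\cdot)$) follows because $f(x,\cdot)=f_e(x,\cdot)\circ\Cm(x)$ is the composition of a convex function with a linear map, and such compositions are convex; I would just note this precomposition fact and that it holds for the full-measure set of $x$ where $f_e(x,\cdot)$ is convex.

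The one point requiring slightly more care — and the closest thing to an obstacle — is confirming that the definition \eqref{fanisotrdef} is genuinely \emph{independent of the representative} $\xi_\eta$ in the decomposition of $\eta$, i.e.\ that $f$ is well-defined despite $\xi_\eta$ being determined only modulo $\ker(\C(x))=N_x$. But this is precisely what \eqref{costonker} records, and it is handled cleanly by the pseudo-inverse: $\Cm(x)$ kills $\im(\C(x))^\perp$ and $\Cm(x)\cdot\C(x)=\Pi_x$ is the projection onto $V_x$, so the value $f_e(x,\Cm(x)\cdot\eta)$ depends only on $\eta$ itself, not on any choice. In other words, the role of introducing the Moore--Penrose pseudo-inverse (rather than an arbitrary right inverse, as in \cite{MR4108409}) is exactly to provide a canonical, measurable, representative-independent section $\Cm(x)$ that makes \eqref{fanisotrdef} unambiguous even when \eqref{revisiogiulic} fails. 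Once \Cref{alglemma} is in hand, each verification above is a short computation with \eqref{pseudoinvproperties} and \eqref{0702b}, so I would present them compactly in the order: Carathéodory property, \eqref{0702h}, \eqref{costonker}, then properties 1 and 2.
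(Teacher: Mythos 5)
Your proposal is correct and follows essentially the same route as the paper: the Carathéodory property via the measurability statement and linearity of $\Cm(x)$ from \Cref{alglemma}, identity \eqref{0702h} via part 2 of \Cref{alglemma} together with \eqref{0702b}, identity \eqref{costonker} via part 3, and properties 1 and 2 by direct combination and by precomposition of a convex function with the linear map $\eta\mapsto\Cm(x)\cdot\eta$. The extra remark on representative-independence is harmless (the definition \eqref{fanisotrdef} never invokes $\xi_\eta$), and matches the paper's motivation for using the Moore--Penrose pseudo-inverse.
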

\begin{proof}
 Let $f$ be the function in \eqref{fanisotrdef}. First we show that $f$ is a Carathéodory function. To this aim, fix $\eta\in\rr^m$, and define the function $\Phi_{\eta}:\Om\to\rr^n$ by $$\Phi_{\eta}(x)=\Cm(x)\cdot\eta$$
    for any $x\in\Om$. By \Cref{alglemma}, $\Phi_{\eta}$ is measurable. Since 
    \begin{equation}\label{fiscara}
f(x,\eta)=f_e(x,\Phi_{\eta}(x))
    \end{equation}
    for a.e. $x\in\Om$, and being $f_e$ a Carathéodory function, we deduce from \cite[Proposition 3.7]{MR0990890} that $x\mapsto f(x,\eta)$ is measurable for any $\eta\in\rr^m$. Fix $x\in\Om$ and define $\Psi_x:\rr^m\to\rr^n$ by
    $$\Psi_x(\eta)=\Phi_{\eta}(x)$$
    for any $\eta\in\rr^m$. Clearly, $\Psi_x$ is a linear function. In particular, by \eqref{fiscara} and being $f_e$ a Carathéodory function, then $\eta\mapsto f(x,\eta)$ is continuous for a.e. $x\in\Om$, so that $f$ is a Carathéodory function. Moreover, in view of \eqref{feuclconv}, \eqref{fiscara}, the linearity of $\Psi_x$ and the definition of $f$, \eqref{0802conv2} follows. In addition,, \eqref{costonker} follows directly from 3 of \Cref{alglemma}. We prove \eqref{0702h}.
    In view of 2 of \Cref{alglemma}, \eqref{0702b} and the definition of $f$, we infer that
    \begin{equation*}
        \begin{split}
            f(x,\C(x)\cdot\xi)=f_e(x,\Cm(x)\cdot\C(x)\cdot\xi)
            =f_e(x,\Pi_x(\xi))
            =f_e(x,\xi)
        \end{split}
    \end{equation*}
    for a.e. $x\in\Om$ and any $\xi\in\rr^n$, whence \eqref{0702h} follows. Finally, \eqref{0702i} follows by \eqref{0702h} and \eqref{0702g}. 
\end{proof}
\subsection{The anisotropic result}\label{secanisores}
In this section we prove the anisotropic representation result. We refer to \cite{MR4108409} for its proof assuming \eqref{revisiogiulic}, and to \cite{withoutlic} for the general case. As pointed out in the introduction, we cannot rely on the Euclidean approach, since piecewise $X$-affine functions do not approximate anisotropic Sobolev functions. The crucial step is therefore to pass from an Euclidean to an anisotropic representation, establishing \eqref{primo}. In order to rely on \Cref{anisorappr}, we need to ensure that the Euclidean Lagrangian $f_e$ satisfies \eqref{0702b}. To this aim, its convexity will play an important role. However, there are cases beyond the \eqref{revisiogiutraslinv} in which $f_e$ is not convex. This situation is more demanding, and will be treated in \Cref{secbeyondti}.
\begin{theorem}\label{2411modiofmythm24}
	Let $1\leq p<\infty$. Let $F:L^p(\Om)\times\mathcal{A}\to [0,\infty]$ satisfy the following properties:
	\begin{enumerate}
		\item [1.]$F$ is a measure;
		\item [2.]$F$ is local;
        \item [3.]$F$ is translation-invariant;
		\item [4.]There exist $a\in L^1_{loc}(\Om)$ and $b\geq 0$ such that
  \begin{equation}\label{anisobound}
      F(u,A)\leq\int_Aa(x)+b|Xu|^p\,dx \qquad\text{ for any $A\in\mA$ and any $u\in \ansol{A}\cap L^p(\Om)$;}
  \end{equation}
 
  \item [5.]$F$ is $L^p$-lower semicontinuous.
	\end{enumerate}
    
	Then there exists a Carathéodory function $f:\Omega\times\mathbf{R}^m\to  [0,\infty)$ 
	such that	
 \begin{equation}\label{primarappr0802}
     F(u,A)=\int_{A}f(x,Xu)\,d x
 \end{equation}
 for any $A\in\mA$ and any $u\in W^{1,p}_{X,loc}(A)\cap L^p(\Om)$.
 Moreover, $f$ satisfies \eqref{costonker}, \eqref{0702i} and \eqref{0802conv2}.
In addition, if $\tilde f:\Om\times\rr^m\to  [0,\infty )$ is a Carathéodory function which verifies \eqref{costonker}, \eqref{0702i} and for which \eqref{primarappr0802} holds with $\tilde f$ in place of $f$, then
	\begin{equation} \label{2411uniquenessss}
		\tilde f(x,\eta)=f(x,\eta)
	\end{equation}
 for a.e. $x\in\Om$ and any $\eta\in\rr^m$. 
\end{theorem}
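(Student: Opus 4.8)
\emph{Overall plan.} I would follow the three-step scheme recalled in the introduction: reduce to the Euclidean representation \Cref{2611bdmtheoremnotdeponu}, convert the Euclidean Lagrangian into an anisotropic one via \Cref{anisorappr}, and then transfer the representation from Euclidean Sobolev functions to $W^{1,p}_X$-functions by density of smooth maps. For the reduction, I first check that $F$ satisfies the hypotheses of \Cref{2611bdmtheoremnotdeponu}: properties $1,2,3,5$ coincide, and since $\C$ is Lipschitz on the bounded set $\Om$, \eqref{euclrapprgrad} gives $|Xu|=|\C(x)Du|\le C_\C|Du|$, so \eqref{anisobound} implies the Euclidean bound \eqref{euclboundinthm} with a larger constant. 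This produces a Carathéodory $f_e\colon\Om\times\rr^n\to[0,\infty)$, convex in its second entry, with $F(u,A)=\int_A f_e(x,Du)\,dx$ for every $u\in\sol{A}\cap\lp{\Om}$. Crucially, I would keep track of the explicit formula $f_e(x,\xi)=\limsup_{R\to0^+}F(\varphi_\xi,B_R(x))/|B_R(x)|$ used to build $f_e$: estimating $F(\varphi_\xi,B_R(x))\le\int_{B_R(x)}\left(a(y)+b|\C(y)\xi|^p\right)dy$ directly from \eqref{anisobound} (recall $X\varphi_\xi(y)=\C(y)\xi$) and differentiating, one gets the sharpened bound \eqref{0702g}, namely $f_e(x,\xi)\le a(x)+b|\C(x)\xi|^p$ for a.e.\ $x$, which the bare statement of \Cref{2611bdmtheoremnotdeponu} does not provide.

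\emph{Passage to the anisotropic Lagrangian.} The heart of the proof is to verify that $f_e$ is invariant along $N_x=\ker(\C(x))$, i.e.\ that \eqref{0702b} holds; this is where convexity enters. Fix $x$ in the full-measure set where $f_e(x,\cdot)$ is finite, convex and \eqref{0702g} holds. For $\zeta\in N_x$ and $\xi\in\rr^n$ one has $f_e(x,\xi+t\zeta)\le a(x)+b|\C(x)\xi|^p$ for \emph{every} $t\in\rr$, so $t\mapsto f_e(x,\xi+t\zeta)$ is a convex function on $\rr$ bounded from above, hence constant; iterating over a basis of $N_x$ yields $f_e(x,\xi)=f_e(x,\Pi_x(\xi))$. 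With \eqref{0702b} in hand, \Cref{anisorappr} (using \eqref{0702g} and convexity) produces the Carathéodory function $f(x,\eta)=f_e(x,\Cm(x)\cdot\eta)\in[0,\infty)$ satisfying \eqref{costonker}, \eqref{0702i}, \eqref{0802conv2} and the identity \eqref{0702h}, $f_e(x,\xi)=f(x,\C(x)\xi)$. Combining \eqref{0702h} with \eqref{euclrapprgrad} and the Euclidean representation already gives $F(u,A)=\int_A f(x,Xu)\,dx$ for all $u\in\sol{A}\cap\lp{\Om}$, in particular for smooth $u$.

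\emph{From smooth functions to $W^{1,p}_X$-functions.} It remains to extend \eqref{primarappr0802} to $u\in\ansol{A}\cap\lp{\Om}$. Both $F(u,\cdot)$ and $E\mapsto\int_E f(x,Xu)\,dx$ are Radon measures on $A$ (the latter since $f(x,Xu)\le a(x)+b|Xu|^p\in L^1_{loc}(A)$, by \eqref{0702i} and \eqref{costonker}), hence inner regular, so it suffices to prove $F(u,A')=\int_{A'}f(x,Xu)\,dx$ for $A'\Subset A$. Fix $A'\Subset A''\Subset A$ open and, by \Cref{MeySer}, smooth $u_h\to u$ in $\anso{A''}$; then $Xu_h\to Xu$ in $L^p(A';\rr^m)$ and, $f(x,\cdot)$ being convex with $p$-growth, $v\mapsto\int_{A'}f(x,Xv)\,dx$ is continuous along $(u_h)_h$. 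Modifying $u_h$ to equal $u$ outside $A'$ gives $\tilde u_h\in\sol{A'}\cap\lp{\Om}$ with $\tilde u_h\to u$ in $\lp{\Om}$ and $\tilde u_h|_{A'}=u_h|_{A'}$, so $L^p$-lower semicontinuity, locality and the smooth case give $F(u,A')\le\liminf_h F(\tilde u_h,A')=\lim_h\int_{A'}f(x,Xu_h)\,dx=\int_{A'}f(x,Xu)\,dx$. For the reverse inequality I would mimic Step 5 of the proof of \Cref{2611bdmtheoremnotdeponu}: viewing $F$ as a local functional relative to $A''$ and setting $H(v,\cdot):=F(v+u,\cdot)$, the functional $H$ still satisfies hypotheses $1$--$5$ relative to $A''$ (its growth constant uses $|Xu|^p\in L^1(A'')$), so the \emph{already-proven} forward half of the present theorem supplies a Carathéodory $h$ with $H(v,B)=\int_B h(x,Xv)\,dx$ for $v\in\sol{B}\cap\lp{A''}$ and $H(v,B)\le\int_B h(x,Xv)\,dx$ for $v\in\ansol{B}\cap\lp{A''}$; with smooth $w_k\to u$ in $\anso{A''}$ and $v_k:=w_k-u\to0$ in $\anso{A''}$, the chain $F(u,A')=\int_{A'}h(x,0)\,dx=\lim_k\int_{A'}h(x,Xv_k)\,dx\ge\lim_k H(v_k,A')=\lim_k F(w_k,A')=\int_{A'}f(x,Xu)\,dx$ (exact representation of $H$ at $v=0\in C^\infty$, continuity of the convex integrals, and the smooth case for $w_k$) gives $F(u,A')\ge\int_{A'}f(x,Xu)\,dx$.

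\emph{Uniqueness and main obstacle.} If $\tilde f$ is Carathéodory, satisfies \eqref{costonker} and \eqref{0702i}, and represents $F$ as in \eqref{primarappr0802}, I would evaluate both representations on the linear maps $\varphi_\xi$ over balls, divide by $|B_R(x)|$ and let $R\to0$: Lebesgue differentiation (the integrands lie in $L^1_{loc}$ by \eqref{0702i}) yields $f(x,\C(x)\xi)=\tilde f(x,\C(x)\xi)$ for a.e.\ $x$, for each fixed $\xi\in\rr^n$. Letting $\xi$ run over a countable dense subset of $\rr^n$ and using continuity of $f(x,\cdot)$ and $\tilde f(x,\cdot)$ gives equality on $\im(\C(x))$ for a.e.\ $x$, and \eqref{costonker} for both functions — together with the fact that $\C(x)\xi_\eta$ is the orthogonal projection of $\eta$ onto $\im(\C(x))$ — upgrades it to all $\eta\in\rr^m$, which is \eqref{2411uniquenessss}. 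The genuinely new ingredient, and the step I expect to be the crux, is the convexity-plus-growth argument producing \eqref{0702b}; everything downstream is a careful, LIC-free transcription of the Euclidean scheme, the only real subtlety being that density of smooth (rather than piecewise $X$-affine) functions forces the lower bound to be obtained through the auxiliary functional $H$.
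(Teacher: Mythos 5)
Your proposal is correct and follows essentially the same route as the paper: reduce to \Cref{2611bdmtheoremnotdeponu}, obtain the sharpened bound $f_e(x,\xi)\le a(x)+b|\C(x)\xi|^p$ by testing on the linear maps $\varphi_\xi$ and differentiating, deduce \eqref{0702b} from convexity plus boundedness along $\ker(\C(x))$, pass to $f$ via \Cref{anisorappr}, extend from smooth to $W^{1,p}_X$ functions through \Cref{MeySer} and the auxiliary functional $H(v,\cdot)=F(v+u,\cdot)$, and prove uniqueness by Lebesgue differentiation combined with \eqref{costonker}. Your write-up merely makes explicit the details the paper compresses into ``argue as in the proof of \Cref{2611bdmtheoremnotdeponu}'' (and derives the refined bound from the explicit formula for $f_e$ rather than from the representation identity, an immaterial variant).
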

\begin{proof}
   \textbf{Step 1.} We show that $F$ verifies the assumptions of \Cref{2611bdmtheoremnotdeponu}.
   We just need to show \eqref{euclboundinthm}. Let $A\in\mA$ and $u\in\so{A}$. Recalling \eqref{euclrapprgrad} and by \eqref{anisobound},
   \begin{equation*}
       F(u,A)\leq \int_Aa(x)+b|Xu|^p\,dx =\int_Aa(x)+b|\C Du|^p\,dx \leq \int_Aa(x)+\tilde b| Du|^p\,dx 
   \end{equation*}
   for some $\tilde b=\tilde b(b,p,X,\Om)\geq 0$. Whence, by \Cref{2611bdmtheoremnotdeponu},
\begin{equation}\label{euclprimarappr0802}
     F(u,A)=\int_{A}f_e(x,Du)\,d x
 \end{equation}
 for any $A\in\mA$ and any $u\in W^{1,p}_{loc}(A)\cap L^p(\Om)$, where $f_e:\Om\times\rr^n\to  [0,\infty)$ is a Carathéodory function satisfying \eqref{0702g} and \eqref{feuclconv}.\\
 \textbf{Step 2.} We show that $f_e$ satisfies \eqref{0702b}. First, we claim that  
 \begin{equation}\label{boundecllagmaan}
     f_e(x,\xi)\leq a(x)+b|\C(x)\xi|^p\qquad\text{for a.e. $x\in\Om$, for any $\xi\in\rr^n$}.
 \end{equation}
 Indeed, let $Q$ be a countable, dense subset of $\rr^n$. Let $\tilde \Om\subseteq\Om$ be such that $|\tilde\Om|=|\Om|$ and every $x\in\tilde\Om$ is a Lebesgue point for $a$ and for $y\mapsto f_e(x,\xi)$ for any $\xi\in Q$. Let $x\in\tilde\Om$ and $\xi\in Q$. Testing \eqref{anisobound} and \eqref{euclprimarappr0802} on $\varphi_\xi(y)=\langle \xi,y\rangle$,
 \begin{equation*}
     \int_{B_R(x)}f_e(x,\xi)\,d x\leq \int_{B_R(x)}a(x)+b|\C(x)\xi|^p\,dx.
 \end{equation*}
Dividing by $|B_R(x)|$ and letting $R\to 0^+$, we conclude \eqref{boundecllagmaan} for a.e. $ x\in\Om$ and for any $\xi\in Q$. Since $f_e$ is a Carathéodory function, \eqref{boundecllagmaan} follows.
 Fix $x\in\Om$ such that \eqref{boundecllagmaan} holds. Fix $\xi\in\rr^n$. Recalling \eqref{splitting}, set $\xi=\xi_{N_x}+\Pi_x(\xi)$. Define on $\ker(\C(x))$ the function   $\tilde \xi\mapsto f(x,\Pi_x(\xi)+\tilde\xi)$ for any $\tilde \xi\in\ker(\C(x))$. It is convex by \eqref{feuclconv}. Moreover,
 \begin{equation*}
     f(x,\Pi_x(\xi)+\tilde\xi)\overset{\eqref{boundecllagmaan}}{\leq} a(x)+b\left|\C(x)\left(\Pi_x(\xi)+\tilde\xi\right)\right|^p=a(x)+b\left|\C(x)\Pi_x(\xi)\right|^p,
 \end{equation*}
 whence it is bounded. Therefore it is constant, and \eqref{0702b} follows.\\
 \textbf{Step 3.}
 By \Cref{anisorappr}, $f:\Om\times\rr^m\to  [0,\infty)$ defined as in \eqref{fanisotrdef} is a Carathéodory function which satisfies \eqref{costonker}, \eqref{0702h}, \eqref{0702i} and \eqref{0802conv2}. Combining \eqref{euclrapprgrad}, \eqref{0702h} and \eqref{euclprimarappr0802},
\begin{equation}\label{primaanisotroreppr}
 F(u,A)=\int_{A}f(x,Xu)\,d x 
\end{equation}
    for any $A\in\mA$ and any $u\in C^\infty(A)\cap L^p(\Om)$. In order to achieve \eqref{primarappr0802}, one argue as in the proof of \Cref{2611bdmtheoremnotdeponu}. The difference here is that we do not rely on the density of piecewise affine functions, but we combine \eqref{primaanisotroreppr} with the Meyers-Serrin-type result provided by \Cref{MeySer}.\\
    \textbf{Step 4.}     
 Finally, assume that there exists a Carathéodory function $\tilde f:\Om\times\rr^m\to  [0,\infty)$ which verifies \eqref{costonker}, \eqref{0702i} and \eqref{primarappr0802}. By \eqref{0702i}, \eqref{primarappr0802} and proceeding as in Step 2, we infer that
	\begin{equation} \label{2411uniquenessssinproof}
		\tilde f(x,\C(x)\cdot\xi)=f(x,\C(x)\cdot\xi)
	\end{equation}
 for a.e. $x\in\Om$ and any $\xi\in\rr^n$. Since both $f$ and $\tilde f$ satisfy \eqref{costonker}, we conclude by \eqref{2411uniquenessssinproof} that
 \begin{equation*} 
		\tilde f(x,\eta)=\tilde f(x,\C(x)\cdot\xi_\eta)=f(x,\C(x)\cdot\xi_\eta)=f(x,\eta)
	\end{equation*}
 for a.e. $x\in\Om$ and any $\eta\in\rr^m$, so that \eqref{2411uniquenessss} follows.
\end{proof}
\section{When \eqref{revisiogiulic} matters}\label{seclicnolic}
For this section, we refer to \cite{withoutlic}. As shown in the previous sections, \eqref{revisiogiulic} does not play any role in the establishment of \Cref{2411modiofmythm24}. However, when \eqref{revisiogiulic} fails, the uniqueness property \eqref{2411uniquenessss} is not in general true without assuming \eqref{costonker}. Moreover, the polynomial bound \eqref{0702i} cannot be replaced by the stronger property
   \begin{equation}\label{strongboundonlylic}
        f(x,\eta)\leq a(x)+b|\eta|^p\qquad\text{ for a.e. $x\in\Om$ and any $\eta\in\rr^m$.}
    \end{equation}
    \begin{example}\label{exsect4}
 Consider the the anisotropy $X=(X_1,X_2)$ defined on $\Om=(0,1)^2\subseteq\rr^2$ by
    \begin{equation*}
        X_1(x)=X_2(x)=\frac{\partial}{\partial x_1}
    \end{equation*}
    for any $x=(x_1,x_2)\in\Om$. Clearly $X_1,X_2$ are Lipschitz continuous on $\Om$ and linearly dependent for any $x\in\Om$. The associated matrices $\C$ and $\Cm$ are respectively
    \begin{equation*}
        \C(x)=\left[ \begin{array}{cc}
	1 & 0  \\
	1 & 0  \\ 
\end{array}
\right ]\qquad\text{and}\qquad\Cm(x)=\left[ \begin{array}{cc}
	1/2 & 1/2  \\
	0 & 0  \\ 
\end{array}
\right ]
    \end{equation*}
for any $x\in\Om$. In particular,
\begin{equation}\label{immaginecontroes}
    N_x=\left\{(0,\lambda)\in\rr^2\,:\,\lambda\in\rr\right\}\qquad\text{and}\qquad \im(\C(x))=\left\{(\lambda,\lambda)\in\rr^2\,:\,\lambda\in\rr\right\}
\end{equation}
  for any $x\in\Om$.
    Consider the functions $f_1,f_2:\Om\times\rr^2\to  [0,\infty)$ defined by 
    \begin{equation*}
     f_1(x,\eta)=2\left(\frac{\eta_1+\eta_2}{2}\right)^2\qquad \text{and}\qquad f_2(x,\eta)=2\left(\frac{\eta_1+\eta_2}{2}\right)^2+e^{(\eta_1-\eta_2)^2}-1
    \end{equation*}
    for any $x\in\Om$ and any $\eta=(\eta_1,\eta_2)\in\rr^2$. They are clearly Carathéodory functions. In view of \eqref{immaginecontroes}, they both verify \eqref{0702i} and \eqref{0802conv2} with $a=0$ and $b=1$. Moreover, 
    \begin{equation}\label{ugualidiverse}
        f_1(x,\C(x)\cdot\xi)=f_2(x,\C(x)\cdot\xi)
    \end{equation}
     for any $x\in\Om$ and any $\xi\in\rr^2$, but they differ otherwise. In particular $f_1$ satisfies \eqref{costonker} and \eqref{strongboundonlylic} with $p=2$, while $f_2$ does not.
    Consider $F_1,F_2:L^2(\Om)\times\mA\to  [0,\infty]$ defined by
\begin{equation}\label{708funzionaleduedef24}
F_j(u,A)=
\displaystyle{\begin{cases}
\int_{A} f_j(x,Xu(x))\,d x&\text{ if }A\in\mA,\,u\in W^{1,2}_{X,loc}(A)\cap L^2(\Om)\\
\infty &\text{otherwise}
\end{cases}
\,.
}
\end{equation}
for $j=1,2$. By means of the forthcoming \Cref{sufficiente}, it is easy to check that $F_1$ and $F_2$ verify the assumptions of \Cref{2411modiofmythm24}, and that
\begin{equation}\label{ugualifinczionali}
    F_1(u,A)=F_2(u,A)=:F(u,A)
\end{equation}
for any $A\in\mA$ and any $u\in L^2(\Om)$. Nevertheless, we know by \eqref{ugualidiverse} that the integral representation of $F$ lacks uniqueness, and moreover that \eqref{strongboundonlylic} is not necessary.
\end{example}
Despite these differences with respect to the \eqref{revisiogiulic} framework, the structural properties of $f$ that one can derive from an integral representation as in \Cref{2411modiofmythm24} are essentially the only ones relevant for deducing structural properties of the associated functional.
\begin{theorem}\label{sufficiente}
    Let $f:\Om\times\rr^m\to[0,\infty]$ be a Carathéodory function. Let $F:L^p(\Om)\times\mA\to [0,\infty]$ be defined as in \eqref{708funzionaleduedef24}.
The following facts hold. 
    \begin{itemize}
        \item [1.] If $f$ satisfies \eqref{0702i}, then
  \begin{equation*}
 F(u,A)\leq\int_Aa(x)+b|Xu|^p\,dx\qquad\text{for any $A\in\mA$ and any $u\in W^{1,p}_{X,loc}(A)\cap L^p(\Om)$.}
  \end{equation*}
     \item [2.] If $\tilde f:\Om\times\rr^m\to [0,\infty ]$ is another Carathéodory function such that 
   \begin{equation*}
       f(x,\C(x)\cdot\xi)=\tilde  f(x,\C(x)\cdot\xi)\qquad\text{for a.e. $x\in\Om$ and any $\xi\in\rr^n$,}
   \end{equation*}
 then
   \begin{equation*}
       F(u,A)=\int_A \tilde f (x,Xu)\,dx\qquad\text{ for any $A\in\mA$ and any $u\in W^{1,p}_{X,loc}(A)\cap L^p(\Om)$.}
   \end{equation*}
  
    \end{itemize}
\end{theorem}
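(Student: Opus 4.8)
The plan is to reduce the whole statement to a single pointwise substitution inside the integral defining $F$. The only non-formal ingredient is the structural fact that, for every $A\in\mA$ and every $u\in W^{1,p}_{X,loc}(A)$, one has $Xu(x)\in\im(\C(x))$ for a.e.\ $x\in A$. I would establish this on an arbitrary $A'\Subset A$, where $u\in W^{1,p}_X(A')$: by \Cref{MeySer} applied on $A'$ there is $(u_h)_h\subseteq C^\infty(A')\cap W^{1,p}_X(A')$ with $Xu_h\to Xu$ in $L^p(A';\rr^m)$; the pointwise identity $Xv=\C\,Dv$, valid for any smooth $v$ (the smooth analogue of \eqref{euclrapprgrad}), gives $Xu_h(x)\in\im(\C(x))$ for every $x\in A'$; and the set $\{v\in L^p(A';\rr^m):v(x)\in\im(\C(x))\text{ for a.e.\ }x\}$ is $L^p$-closed (pass to an a.e.\ convergent subsequence and use that each $\im(\C(x))$ is a closed subspace), so $Xu$ inherits the property. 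Moreover, \eqref{pseudoinvproperties} shows that $\C(x)\Cm(x)$ is symmetric and idempotent, hence it is the orthogonal projection of $\rr^m$ onto $\im(\C(x))$; thus, setting $\xi_u(x):=\Cm(x)\cdot Xu(x)$, one gets $\C(x)\cdot\xi_u(x)=Xu(x)$ for a.e.\ $x\in A$, with $x\mapsto\xi_u(x)$ measurable (by the same measurability argument as in the proof of \Cref{anisorappr}, using that $x\mapsto\Cm(x)\cdot\eta$ is measurable by \Cref{alglemma}).

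Granting this, part 1 is immediate. Fix $A\in\mA$ and $u\in W^{1,p}_{X,loc}(A)\cap L^p(\Om)$, so that $F(u,A)=\int_A f(x,Xu(x))\,dx$ by the definition \eqref{708funzionaleduedef24}. For a.e.\ $x\in A$ I would write $Xu(x)=\C(x)\cdot\xi_u(x)$ and apply \eqref{0702i} with $\xi=\xi_u(x)$, obtaining
\[
f(x,Xu(x))=f\bigl(x,\C(x)\cdot\xi_u(x)\bigr)\leq a(x)+b\,\bigl|\C(x)\cdot\xi_u(x)\bigr|^p=a(x)+b\,|Xu(x)|^p ,
\]
and integrating over $A$ gives the bound. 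One should only be mindful that the exceptional null set in \eqref{0702i} does not depend on $\xi$, so that plugging in the $x$-dependent vector $\xi_u(x)$ is legitimate.

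Part 2 uses exactly the same substitution: for a.e.\ $x\in A$,
\[
f(x,Xu(x))=f\bigl(x,\C(x)\cdot\xi_u(x)\bigr)=\tilde f\bigl(x,\C(x)\cdot\xi_u(x)\bigr)=\tilde f(x,Xu(x)),
\]
the middle identity being the hypothesis evaluated at $\xi=\xi_u(x)$; should the hypothesis be read with the quantifiers in the order ``for every $\xi$, for a.e.\ $x$'', I would first restrict to a countable dense $Q\subseteq\rr^n$ to obtain a common full-measure set and then extend to all $\xi\in\rr^n$ by continuity of $f(x,\cdot)$, $\tilde f(x,\cdot)$ and of $\xi\mapsto\C(x)\cdot\xi$. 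Integrating over $A$ yields $F(u,A)=\int_A f(x,Xu)\,dx=\int_A\tilde f(x,Xu)\,dx$.

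The main (indeed essentially only) obstacle is the preliminary fact $Xu(x)\in\im(\C(x))$ a.e.; once it is available, together with the measurable Moore–Penrose selection from \Cref{alglemma}, both parts collapse to pointwise substitutions, and the remaining measurability bookkeeping and the harmless quantifier-order issue are routine.
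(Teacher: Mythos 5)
Your proposal is correct; note that the survey states \Cref{sufficiente} without proof, deferring to \cite{withoutlic}, and your argument is essentially the intended one: everything reduces to the structural fact that $Xu(x)\in\im(\C(x))$ for a.e.\ $x$ whenever $u\in W^{1,p}_{X,loc}(A)$, which you establish correctly via Meyers--Serrin approximation (\Cref{MeySer}) on sets $A'\Subset A$, the identity $Xv=\C\,Dv$ for smooth $v$, and closedness of the pointwise constraint under $L^p$ convergence (plus a countable exhaustion of $A$). Once that is in place, writing $Xu(x)=\C(x)\cdot\xi_u(x)$ with $\xi_u(x)=\Cm(x)\cdot Xu(x)$ is legitimate because \eqref{pseudoinvproperties} makes $\C(x)\Cm(x)$ the orthogonal projection onto $\im(\C(x))$, and both parts then follow by pointwise substitution, with your countable-dense-set remark disposing of the quantifier/null-set issue.
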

\section{A brief overview of the existing literature}\label{secbeyondti}
In this final section, we guide the reader through the existing literature.
\begin{enumerate}
    \item [1.] For functionals satisfying \eqref{revisiogiutraslinv}, the original Euclidean result, as well as its applications to $\Gamma$-convergence, can be found in \cite{MR0794824}. 
     The anisotropic representation result assuming \eqref{revisiogiulic} and its applications to $\Gamma$-convergence can be found in \cite{MR4108409,MR4504133}. The same results without assuming \eqref{revisiogiulic} have been shown in \cite{withoutlic}. The latter allows to avoid \eqref{revisiogiulic} even beyond the \eqref{revisiogiutraslinv} setting, so we will only refer to it implicitly thereafter. 
      \item [2.] For convex functionals which do not satisfy \eqref{revisiogiutraslinv}, we refer to \cite{MR0583636} for the Euclidean integral representation and $\Gamma$-convergence, and to \cite{MR4609808,MR4566142} for the anisotropic results.
      \item [3.] For non-convex functionals which do not satisfy \eqref{revisiogiutraslinv}, we refer to \cite{MR0839727} for the Euclidean integral representation, and to \cite{MR4609808,MR4566142} for the corresponding anisotropic results and their applications to $\Gamma$-convergence. We point out that, when the approach of \Cref{mainsection} yields an Euclidean Lagrangian $f_e$ which is not convex in any of its variables, in order to deduce \eqref{0702b} we cannot argue as in the second step of the proof of \Cref{2411modiofmythm24}. Instead, a suitable notion of \emph{$X$}-convexity has to be introduced (cf. \cite[Definition 5.3]{MR4609808}). The latter, via a zig-zag argument  (cf. \cite[Lemma 5.5]{MR4609808}), is implied by the $W^{1,p}_X$-lower semicontinuity of the functional, and allows to establish \eqref{0702b} as well  (cf. \cite[Proposition 5.4]{MR4609808}). 
      \item [4.] In the study of $\Gamma$-convergence, it is possible to replace a fixed anisotropy $X$ with a sequence of converging anisotropies, say $(X^h)_h$. We refer the reader to \cite{maioneparonettoverzellesi} for results in this direction.
      \item [5.] Although in a different setting,  we refer to \cite{MR3297726,MR3707084,MR4355973} integral representation and $\Gamma$-convergence result in a Cheeger-Sobolev metric setting.
\end{enumerate}


\begin{thebibliography}{99.}%
%
%
\bibitem{MR0751305}
E.~Acerbi and N.~Fusco.
\newblock Semicontinuity problems in the calculus of variations.
\newblock {\em Arch. Rational Mech. Anal.}, 86(2):125--145, 1984.

\bibitem{MR3971262}
A.~Agrachev, D.~Barilari, and U.~Boscain.
\newblock {\em A comprehensive introduction to sub-{R}iemannian geometry},
  volume 181 of {\em Cambridge Studies in Advanced Mathematics}.
\newblock Cambridge University Press, Cambridge, 2020.

\bibitem{MR1271411}
G.~Alberti.
\newblock Integral representation of local functionals.
\newblock {\em Ann. Mat. Pura Appl. (4)}, 165:49--86, 1993.

\bibitem{MR3297726}
O.~Anza~Hafsa and J.-P. Mandallena.
\newblock On the relaxation of variational integrals in metric {S}obolev
  spaces.
\newblock {\em Adv. Calc. Var.}, 8(1):69--91, 2015.

\bibitem{MR3707084}
O.~Anza~Hafsa and J.-P. Mandallena.
\newblock {$\Gamma$}-convergence of nonconvex integrals in {C}heeger-{S}obolev
  spaces and homogenization.
\newblock {\em Adv. Calc. Var.}, 10(4):381--405, 2017.

\bibitem{MR4355973}
O.~Anza~Hafsa and J.-P. Mandallena.
\newblock Integral representation and relaxation of local functionals on
  {C}heeger-{S}obolev spaces.
\newblock {\em Nonlinear Anal.}, 217:Paper No. 112744, 28, 2022.

\bibitem{MR2363343}
A.~Bonfiglioli, E.~Lanconelli, and F.~Uguzzoni.
\newblock {\em Stratified {L}ie groups and potential theory for their
  sub-{L}aplacians}.
\newblock Springer Monographs in Mathematics. Springer, Berlin, 2007.

\bibitem{MR1968440}
A.~Braides.
\newblock {\em {$\Gamma$}-convergence for beginners}, volume~22 of {\em Oxford
  Lecture Series in Mathematics and its Applications}.
\newblock Oxford University Press, Oxford, 2002.

\bibitem{MR1684713}
A.~Braides and A.~Defranceschi.
\newblock {\em Homogenization of multiple integrals}, volume~12 of {\em Oxford
  Lecture Series in Mathematics and its Applications}.
\newblock The Clarendon Press, Oxford University Press, New York, 1998.

\bibitem{MR1020296}
G.~Buttazzo.
\newblock {\em Semicontinuity, relaxation and integral representation in the
  calculus of variations}, volume 207 of {\em Pitman Research Notes in
  Mathematics Series}.
\newblock Longman Scientific \& Technical, Harlow; copublished in the United
  States with John Wiley \& Sons, Inc., New York, 1989.

\bibitem{MR0583636}
G.~Buttazzo and G.~Dal~Maso.
\newblock {$\Gamma $}-limits of integral functionals.
\newblock {\em J. Analyse Math.}, 37:145--185, 1980.

\bibitem{MR0839727}
G.~Buttazzo and G.~Dal~Maso.
\newblock A characterization of nonlinear functionals on {S}obolev spaces which
  admit an integral representation with a {C}arath\'{e}odory integrand.
\newblock {\em J. Math. Pures Appl. (9)}, 64(4):337--361, 1985.

\bibitem{MR0794824}
G.~Buttazzo and G.~Dal~Maso.
\newblock Integral representation and relaxation of local functionals.
\newblock {\em Nonlinear Anal.}, 9(6):515--532, 1985.

\bibitem{Capogna2024}
L.~Capogna, G.~Giovannardi, A.~Pinamonti, and S.~Verzellesi.
\newblock The asymptotic {$p$}-{P}oisson equation as {$p\to\infty$} in
  {C}arnot-{C}arath\'eodory spaces.
\newblock {\em Math. Ann.}, 390(2):2113--2153, 2024.

\bibitem{MR1880}
W.-L. Chow.
\newblock \"uber {S}ysteme von linearen partiellen {D}ifferentialgleichungen
  erster {O}rdnung.
\newblock {\em Math. Ann.}, 117:98--105, 1939.

\bibitem{MR0990890}
B.~Dacorogna.
\newblock {\em Direct methods in the calculus of variations}, volume~78 of {\em
  Applied Mathematical Sciences}.
\newblock Springer-Verlag, Berlin, 1989.

\bibitem{MR0567216}
G.~Dal~Maso.
\newblock Integral representation on {${\rm BV}( \Omega  )$} of {$\Gamma
  $}-limits of variational integrals.
\newblock {\em Manuscripta Math.}, 30(4):387--416, 1979/80.

\bibitem{MR1201152}
G.~Dal~Maso.
\newblock {\em An introduction to {$\Gamma$}-convergence}, volume~8 of {\em
  Progress in Nonlinear Differential Equations and their Applications}.
\newblock Birkh\"{a}user Boston, Inc., Boston, MA, 1993.

\bibitem{MR0375037}
E.~De~Giorgi.
\newblock Sulla convergenza di alcune successioni d'integrali del tipo
  dell'area.
\newblock {\em Rend. Mat. (6)}, 8:277--294, 1975.

\bibitem{MR0448194}
E.~De~Giorgi and T.~Franzoni.
\newblock Su un tipo di convergenza variazionale.
\newblock {\em Atti Accad. Naz. Lincei Rend. Cl. Sci. Fis. Mat. Nat. (8)},
  58(6):842--850, 1975.

\bibitem{MR1727362}
I.~Ekeland and R.~T\'{e}mam.
\newblock {\em Convex analysis and variational problems}, volume~28 of {\em
  Classics in Applied Mathematics}.
\newblock Society for Industrial and Applied Mathematics (SIAM), Philadelphia,
  PA, english edition, 1999.
\newblock Translated from the French.

\bibitem{MR4609808}
F.~Essebei, A.~Pinamonti, and S.~Verzellesi.
\newblock Integral representation of local functionals depending on vector
  fields.
\newblock {\em Adv. Calc. Var.}, 16(3):767--789, 2023.

\bibitem{MR4566142}
F.~Essebei and S.~Verzellesi.
\newblock {$\varGamma$}-compactness of some classes of integral functionals
  depending on vector fields.
\newblock {\em Nonlinear Anal.}, 232:Paper No. 113278, 21, 2023.

\bibitem{Folland1975161}
G.~Folland.
\newblock Subelliptic estimates and function spaces on nilpotent lie groups.
\newblock {\em Arkiv för matematik}, 13(1):161 – 207, 1975.

\bibitem{MR0657581}
G.~B. Folland and E.~M. Stein.
\newblock {\em Hardy spaces on homogeneous groups}, volume~28 of {\em
  Mathematical Notes}.
\newblock Princeton University Press, Princeton, NJ; University of Tokyo Press,
  Tokyo, 1982.

\bibitem{FonsLeonLp}
I.~Fonseca and G.~Leoni.
\newblock {\em Modern {M}ethods in the {C}alculus of {V}ariations: $L^p$
  {S}paces}.
\newblock Springer Monographs in Mathematics. Springer New York, NY, 2007.

\bibitem{MR1437714}
B.~Franchi, R.~Serapioni, and F.~Serra~Cassano.
\newblock Meyers-{S}errin type theorems and relaxation of variational integrals
  depending on vector fields.
\newblock {\em Houston J. Math.}, 22(4):859--890, 1996.

\bibitem{MR1448000}
B.~Franchi, R.~Serapioni, and F.~Serra~Cassano.
\newblock Approximation and imbedding theorems for weighted {S}obolev spaces
  associated with {L}ipschitz continuous vector fields.
\newblock {\em Boll. Un. Mat. Ital. B (7)}, 11(1):83--117, 1997.

\bibitem{MR1404326}
N.~Garofalo and D.-M. Nhieu.
\newblock Isoperimetric and {S}obolev inequalities for
  {C}arnot-{C}arath\'{e}odory spaces and the existence of minimal surfaces.
\newblock {\em Comm. Pure Appl. Math.}, 49(10):1081--1144, 1996.

\bibitem{GN}
N.~Garofalo and D.-M. Nhieu.
\newblock Lipschitz continuity, global smooth approximations and extension
  theorems for {S}obolev functions in {C}arnot-{C}arath\'{e}odory spaces.
\newblock {\em J. Anal. Math.}, 74:67--97, 1998.

\bibitem{doi:10.1142/5002}
E.~Giusti.
\newblock {\em Direct Methods in the Calculus of Variations}.
\newblock WORLD SCIENTIFIC, 2003.

\bibitem{MR1417720}
G.~H. Golub and C.~F. Van~Loan.
\newblock {\em Matrix computations}.
\newblock Johns Hopkins Studies in the Mathematical Sciences. Johns Hopkins
  University Press, Baltimore, MD, third edition, 1996.

\bibitem{MR1421823}
M.~Gromov.
\newblock Carnot-{C}arath\'{e}odory spaces seen from within.
\newblock In {\em Sub-{R}iemannian geometry}, volume 144 of {\em Progr. Math.},
  pages 79--323. Birkh\"{a}user, Basel, 1996.

\bibitem{Hoormander1967147}
L.~Hörmander.
\newblock Hypoelliptic second order differential equations.
\newblock {\em Acta Mathematica}, 119(1):147 – 171, 1967.

\bibitem{Enrico2025book}
E.~Le~Donne.
\newblock Metric lie groups. carnot-carath\'eodory spaces from the homogeneous
  viewpoint, 2024.
\newblock \url{https://arxiv.org/abs/2410.07291}.

\bibitem{maioneparonettoverzellesi}
A.~Maione, F.~Paronetto, and S.~Verzellesi.
\newblock Variational convergences under moving anisotropies.
\newblock \url{https://doi.org/10.48550/arXiv.2504.02552}, 2025.

\bibitem{MR4108409}
A.~Maione, A.~Pinamonti, and F.~Serra~Cassano.
\newblock {$\Gamma$}-convergence for functionals depending on vector fields.
  {I}. {I}ntegral representation and compactness.
\newblock {\em J. Math. Pures Appl. (9)}, 139:109--142, 2020.

\bibitem{MR4504133}
A.~Maione, A.~Pinamonti, and F.~Serra~Cassano.
\newblock {$\Gamma$}-convergence for functionals depending on vector fields.
  {II}. {C}onvergence of minimizers.
\newblock {\em SIAM J. Math. Anal.}, 54(6):5761--5791, 2022.

\bibitem{MR4054935}
A.~Maione and E.~Vecchi.
\newblock Integral representation of local left-invariant functionals in
  {C}arnot groups.
\newblock {\em Anal. Geom. Metr. Spaces}, 8(1):1--14, 2020.

\bibitem{MR0793239}
A.~Nagel, E.~M. Stein, and S.~Wainger.
\newblock Balls and metrics defined by vector fields. {I}. {B}asic properties.
\newblock {\em Acta Math.}, 155(1-2):103--147, 1985.

\bibitem{pauc__1941}
C.~Pauc.
\newblock {\em “{La}” mèthode métrique en clacul des variations}.
\newblock Actualités scientifiques et industrielles. Hermann, 1941.

\bibitem{PVW}
A.~Pinamonti, S.~Verzellesi, and C.~Wang.
\newblock The {A}ronsson equation for absolute minimizers of supremal
  functionals in {C}arnot--{C}arath\'{e}odory spaces.
\newblock {\em Bull. Lond. Math. Soc.}, 55(2):998--1018, 2023.

\bibitem{MR0436223}
L.~P. Rothschild and E.~M. Stein.
\newblock Hypoelliptic differential operators and nilpotent groups.
\newblock {\em Acta Math.}, 137(3-4):247--320, 1976.

\bibitem{MR0575349}
G.~Strang.
\newblock {\em Linear algebra and its applications}.
\newblock Academic Press [Harcourt Brace Jovanovich, Publishers], New
  York-London, second edition, 1980.

\bibitem{withoutlic}
S.~Verzellesi.
\newblock Variational properties of local functionals driven by arbitrary
  anisotropies.
\newblock \url{https://doi.org/10.48550/arXiv.2402.12227}, 2024.

\bibitem{MR2247884}
C.~Wang.
\newblock The {A}ronsson equation for absolute minimizers of
  {$L^\infty$}-functionals associated with vector fields satisfying
  {H}\"{o}rmander's condition.
\newblock {\em Trans. Amer. Math. Soc.}, 359(1):91--113, 2007.

\bibitem{MR2546006}
C.~Wang and Y.~Yu.
\newblock Aronsson's equations on {C}arnot-{C}arath\'{e}odory spaces.
\newblock {\em Illinois J. Math.}, 52(3):757--772, 2008.
\end{thebibliography}
\end{document}